 \newcounter{idesc} 
\DeclareSymbolFont{AMSb}{U}{msb}{m}{n}
 \def\P2m{\P^2(M)}
\def\N{\mathbb N}
\def\R{\mathbb R}
\def\mint{\,\mathop{\makebox[0pt][c]{$\int$}\makebox[0pt][c]{--}\,}}
\def\E{\mathcal E}
\def\1{\,{\makebox[0pt][c]{\normalfont    1}
\makebox[2.5pt][c]{\raisebox{3.5pt}{\tiny {$\|$}}}
\makebox[-2.5pt][c]{\raisebox{1.7pt}{\tiny {$\|$}}}
\makebox[2.5pt][c]{} }}
\newcommand{\norm}[1]{\left\| #1 \right\|}
\def\P{{ /\!\!/}}
\def\for{\mbox{ for }}
\def\ol{\overline}
\def\supp{\mathop{\mbox{supp}}}
\def\divergence{{{\mathop{\,{\rm div}}}}}
\def\dist{{\mbox{dist}}}
\def\Lip{\mathop{\mbox{\normalfont Lip}}}
\def\dist{\mbox{\normalfont dist}}
\newcommand{\D}{{\mathbb{D}}}
\theoremstyle{definition}
\def\P{{\mathcal P}}
\def\L2{{L^2}}
\def\L11{\mathcal P _{ac}}
\def\S1{{S^1}}
\newcounter{zaehler}
\def\bbox{{\hfill $\Box$}}
\def\id{{\mbox{id}}}
\def\L{{\mathcal L}}
\def\D{{\mathcal D}}
\newtheorem{theorem}{Theorem}[section]
\newtheorem{lemma}[theorem]{Lemma}
\newtheorem{corollary}[theorem]{Corollary}
\newtheorem{proposition}[theorem]{Proposition}
\newtheorem{remark}[theorem]{Remark}
\def\indicator{{\mathchoice {1\mskip-4mu\mathrm l}%
{1\mskip-4mu\mathrm l}{1\mskip-4.5mu\mathrm l}%
{1\mskip-5mu\mathrm l}}}
\DeclareMathOperator{\capacity}{cap}
\title{Regularity Properties for a System of Interacting Bessel Processes}
\author{ Sebastian Andres and Max-K. von Renesse\\
  \footnote{\today, Technische Universit\"at Berlin,
 email: [andres,mrenesse]\@@math.tu-berlin.de,
\textbf{Keywords:} Bessel process, Coulomb Interaction, Reflecting
Boundary Condition, Feller Property, Muckenhoupt Weights} }
\date{}
\numberwithin{equation}{section}
\begin{document}

 \maketitle
\vspace{-4em}

\begin{center}
{\bf Abstract}\\

\bigskip

\begin{minipage}{14.3cm}
{We study the regularity of a diffusion on a simplex with singular
drift and reflecting boundary condition which describes  a finite
system of particles on an interval with Coulomb interaction and
reflection between nearest neighbors.

\smallskip As our main result we establish the Feller property
for the process in both cases of repulsion and attraction. In
particular the system can be started from any initial state,
including multiple point configurations. Moreover we show that the
process is a Euclidean semi-martingale if and only if the
interaction is repulsive. Hence, contrary to classical results about
reflecting Brownian motion in smooth domains, in the attractive
regime a construction via a system of Skorokhod SDEs is impossible.
Finally, we establish exponential heat kernel gradient estimates in
the repulsive regime.

\smallskip The main proof for the attractive case is based on  potential theory in Sobolev spaces with Muckenhoupt
weights. }
\end{minipage}

\end{center}



\section{Introduction and Main Results}
The study of Brownian motion or more general diffusions in Euclidean
domains with reflecting boundary condition is a classical subject in
stochastic analysis, with strong connection to boundary regularity
theory for parabolic PDE. Starting from the early works by e.g.\
Fukushima \cite{MR0231444} and Tanaka \cite{MR529332},
the field has seen perpetual research activity, c.f.
\cite{MR1106272,MR745330} (and e.g.\ \cite{MR2124641} for a more
comprehensive list of references). Typically the reflecting process
can be obtained in two ways. Either by solving a system of Skorokhod
SDE involving the local time at the boundary or via Dirichlet form
methods, and under suitable smoothness assumptions on the domain and
the coefficients both approaches are equivalent.

\smallskip In this paper we study a very specific singular case, in
which the drift coefficients of the operator may diverge and where
the equivalence of the two approaches breaks down but the process
exhibits good spatial regularity nevertheless. Our case corresponds
to a Dirichlet form obtained from the closure of the quadratic form
\[ \E(f,f) = \int_{\Omega} |\nabla f|^2 q(dx), \quad f\in C^{1}(\overline \Omega)\]
on $L^2(\Omega, dq)$, for a very specific choice of domain
$\Omega\subset \R^N$ and measure $q(dx)$ (see below). Similar
variants were studied 
under the smoothness assumption $q \in H^{1,1}(\Omega)$  and $\nabla
(\log q) \in L^{p}(\Omega,dq)$,
 $p>N$
in \cite{MR2021192}  and  \cite{MR2321042} respectively, where a
Skorokhod decomposition for the induced process still holds.

The case treated in this paper is obtained by choosing
\[\Omega = \Sigma_N = \{
x\in \R^N: \,  0 < x^1 < x^2 \cdots < x^N < 1  \}\] and
\begin{align} \label{def_qn}
 q_N (dx) =\frac{1}{Z_\beta} \prod_{i=0}^{N} (x^{i+1}-x^{i})^{\frac \beta {N+1} -1}dx^1\, dx^2\cdots\,
dx^N,
\end{align}
where by convention $x^0=1$, $x^{N+1}=1$,  $Z_\beta=(
{\Gamma(\beta)}/{(\Gamma(\beta/(N+1)))^{N+1}})^{-1}$ is a
normalization constant and $\beta>0$ is a free parameter.

\smallskip  We study the process $(X^N_\cdot)$ generated by (the $L^2(\Sigma_N,
q_N)$-closure of) the (pre-)Dirichlet form
\begin{equation*}
 \E^N(f,f) = \int_{\Sigma_N} | \nabla f|^2(x) \, q_N (dx) , \quad f \in C^\infty
 (\overline{\Sigma}_N),
\label{thedirichletform}
\end{equation*}
whose generator $\mathcal L$ extends the operator $(L^N, D(L^N ))$
\begin{equation}
L^N f(x)= (\frac \beta {N+1} -1 )   \sum_{i=1}^{N}\left ( \frac 1
{x^i - x^{i-1}} - \frac 1 {x^{i+1} - x^{i}} \right) \frac {\partial
}{\partial x^i} f(x) + \Delta f(x)\quad \for x\in \Sigma_N
\label{generator}
\end{equation}
 with domain
\[
D(L^N)= C^2_{Neu}= \{ f \in C^2(\overline{\Sigma}_N)\, |\, \nabla f
\cdot \nu =0 \mbox{ on all $(n-1)$-dimensional faces of } \partial
\Sigma_N\},\] and $\nu$ denoting the outward normal field on
$\partial \Sigma_N$.

\smallskip On the level of formal It\^{o} calculus $(L^N,D(L^N))$
corresponds to an order preserving dynamics for the location of $N$
particles in the unit interval which solves the system of coupled
Skorokhod SDEs
\begin{align}
d x^i _t = (\frac \beta {N+1} -1 ) \left ( \frac 1 {x^i_t -
x^{i-1}_t} - \frac 1 {x^{i+1}_t - x^{i}_t} \right) dt +\sqrt 2
dw^i_t + dl_t^{i-1} -dl_t^i, \quad i= 1, \cdots, N\label{bessde1}
\end{align}
where  $\{w_i\}$ are independent real Brownian motions and $\{l^i\}$
are the collision local times, i.e.\ satisfying
\begin{equation}
  dl_t^i \geq 0, \quad l_t^i = \int _0^t \indicator_{\{x^i_s=x_s^{i+1}\}} dl_s^i.
\label{bessde2}
\end{equation}
 $(X^N_\cdot)$ may thus be considered as a system of coupled two sided real Bessel processes with uniform Bessel dimension
 $\delta=   \beta /({N+1})$. Similar to the standard real Bessel process $BES(\delta)$ with Bessel dimension $\delta < 1$,
 the existence of $X^N$, even with initial condition initial condition $X_0 = x\in
 \Sigma_N$, is not trivial, nor are its regularity properties.\\

Our  motivation for studying this process is its relation to the
Wasserstein diffusion, c.f.\ \cite{vrs07}. In \cite{pws} we showed
that the normalized empirical measure of the system \eqref{bessde1}
converges to the Wasserstein diffusion in the high density regime
for $N\to\infty$. Hence the regularity properties of $(X^N_\cdot)$
may give an indication of the regularity of the Wasserstein
diffusion, although in the present paper we  treat the  case when
the dimension $N$ is fixed. Here our results read as follows.
\begin{theorem} \label{fellerthm} {\it For any $ \beta >0$, the Dirichlet form $\E^N$ generates a Feller process
$(X_\cdot)$ on $\overline{\Sigma}_N$, i.e. the associated transition
semigroup on $L^2(\Sigma_N, q_N)$ defines a strongly continuous
contraction semigroup on the subspace $C(\overline{\Sigma}_N)$
equipped with the sup-norm topology.\\ Moreover, for $\beta \geq
(N+1)$ the  associated heat kernel $(P_t)$ is exponentially
smoothing on Lipschitz functions, i.e. for $t>0$
\begin{equation}
\Lip(P_t f) \leq\exp\bigl(-(\frac \beta {N+1} -1) k_N \cdot t\bigr)
\Lip(f)  \label{contraction}
\end{equation}
for all $f\in $Lip$(\overline\Sigma_N)$, where $k_N>0$ is a
universal constant depending only on $N$.\ }
\end{theorem}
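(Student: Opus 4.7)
The plan splits along the two assertions: the Bakry-Emery style gradient estimate \eqref{contraction}, which is claimed only for $\beta\geq N+1$, and the Feller property itself, which must hold for every $\beta>0$. I would use the gradient estimate as the route to Feller in the repulsive regime and rely on weighted Sobolev theory in the attractive regime.

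\medskip

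\textbf{Gradient estimate.} Working on the core $C^2_{Neu}$, I would write $q_N=e^{-V}/Z_\beta$ with
\[
V(x) = -\bigl(\tfrac{\beta}{N+1}-1\bigr)\sum_{i=0}^{N}\log(x^{i+1}-x^i).
\]
A direct computation yields
\[
\Hess V(x)(v,v) = \bigl(\tfrac{\beta}{N+1}-1\bigr)\sum_{i=0}^{N}\frac{(v^{i+1}-v^i)^2}{(x^{i+1}-x^i)^2}, \qquad v^0=v^{N+1}=0.
\]
Since $x^{i+1}-x^i\leq 1$ on $\overline{\Sigma}_N$ and the form $v\mapsto\sum_i(v^{i+1}-v^i)^2$ is the discrete Dirichlet Laplacian on $\R^N$, which is strictly positive definite with smallest eigenvalue $k_N>0$, one obtains $\Hess V\geq K\cdot \mathrm{Id}$ with $K=(\frac{\beta}{N+1}-1)k_N\geq 0$. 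The Bochner identity on the flat simplex,
\[
\Gamma_2(f,f) = \|\Hess f\|^2 + \Hess V(\nabla f,\nabla f),
\]
then gives $\Gamma_2\geq K\,\Gamma$. Convexity of $\Sigma_N$ together with the Neumann condition on $f$ makes the boundary contributions in the standard integration by parts non-negative (via non-negativity of the second fundamental form), so that
\[
\frac{d}{ds}P_s(|\nabla P_{t-s}f|^2) = 2P_s\Gamma_2(P_{t-s}f,P_{t-s}f) \geq 2K\, P_s(|\nabla P_{t-s}f|^2).
\]
Gronwall yields $|\nabla P_t f|^2\leq e^{-2Kt}P_t(|\nabla f|^2)$, and taking sup-norms with a smooth approximation of $\Lip(\overline{\Sigma}_N)$ gives \eqref{contraction}.

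\medskip

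\textbf{Feller property.} In the repulsive range $\beta\geq N+1$ the gradient bound immediately gives $P_t:\Lip(\overline{\Sigma}_N)\to\Lip(\overline{\Sigma}_N)$ with a uniform Lipschitz constant, which combined with sup-norm density of Lipschitz functions in $C(\overline{\Sigma}_N)$ and $L^\infty$-contractivity of $P_t$ yields the Feller property via a standard equicontinuity-plus-$L^2$-continuity argument. In the attractive range $0<\beta<N+1$, Bakry-Emery fails because $V$ is concave. Here I would exploit that $q_N$ remains a Muckenhoupt $A_2$ weight on $\Sigma_N$ -- the exponents $\tfrac{\beta}{N+1}-1\in(-1,\infty)$ are admissible -- and invoke Fabes-Kenig-Serapioni-type regularity theory for divergence-form elliptic operators with $A_2$ weights to obtain local Hölder continuity of $P_t f$ inside $\Sigma_N$. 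Continuity up to $\partial\Sigma_N$, and hence the sup-norm Feller property, would follow from a weighted boundary regularity argument along the faces $\{x^{i+1}=x^i\}$ that exploits the Dirichlet-form structure.

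\medskip

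The principal obstacles are: (i) making the $\Gamma_2$ calculation rigorous despite the singular drift at $\partial\Sigma_N$, which I would address by exhausting $\overline{\Sigma}_N$ from within by $\Sigma_N^\epsilon=\{x: x^{i+1}-x^i>\epsilon\ \forall i\}$, running the argument with a regularized weight $q_N^\epsilon$, and passing to the limit using closability of $\E^N$ and $L^2$-convergence of the regularized semigroups; and (ii) in the attractive regime, extending Hölder regularity up to the singular faces, which is precisely the Muckenhoupt potential-theory step the abstract announces as the heart of the paper.
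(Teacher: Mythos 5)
Your proposal identifies the two regimes and some of the correct structural ingredients, but it departs from the paper on precisely the points the authors flag as the hard ones, and in both regimes a genuine gap remains.

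\textbf{Repulsive case $\beta\geq N+1$.} You propose the standard Bakry--\'Emery $\Gamma_2$ route. The paper explicitly says this is the route it wishes to \emph{avoid}: the $\Gamma_2$-criterion must be verified on an algebra that is dense in the domain of the generator w.r.t.\ the graph norm, and establishing such density for this degenerate Neumann operator leads straight back to the elliptic regularity theory one is trying to sidestep. Your proposed remedy (exhaust $\overline\Sigma_N$ by $\Sigma_N^\epsilon$, regularize $q_N$, pass to the limit by closability and $L^2$-convergence) does not repair this: on $\Sigma_N^\epsilon$ you would again face a Neumann boundary condition on a non-smooth convex polytope, and $L^2$-convergence of semigroups does not, by itself, let you pass a pointwise gradient bound to the limit. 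The paper's argument instead keeps the domain fixed, replaces $V$ by the smooth $V_\epsilon(x)=V((1-\epsilon)x+\epsilon\overline x)$ (so the corresponding process admits a Skorokhod construction on the simplex), derives a Wasserstein contraction by coupling by reflection, and passes to the limit using the Ambrosio--Savar\'e--Zambotti stability theorem for log-concave reference measures. That stability theorem is the missing ingredient, and it is also what delivers the Feller property in this regime automatically. Your Hessian computation and identification of $k_N$ with the least eigenvalue of the discrete Dirichlet Laplacian are correct and match the paper.

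\textbf{Attractive case $\beta<N+1$.} You correctly flag Muckenhoupt $A_2$ weight theory as the right framework, but the decisive idea is absent. The paper does not work with $q_N$ on $\Sigma_N$; it extends the weight by reflection to a globally $A_2$ weight $\hat q$ on all of $\R^N$, so that the problematic faces $\{x^{i+1}=x^i\}$ of $\partial\Sigma_N$ become interior hyperplanes of a reflection-symmetric problem. Interior Fabes--Kenig--Serapioni regularity for $\hat q$ then handles the former boundary. This is what makes ``continuity up to the boundary'' tractable; your phrase ``a weighted boundary regularity argument along the faces'' is a placeholder for exactly this step, and without the reflection extension I do not see how you would carry it out. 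Beyond that, two more pieces are required which your sketch omits entirely: (a) a localization (stopping $Y$ in a box $E$) together with a Wiener-test verification (Fabes--Jerison--Kenig) that $\partial E$ is regular, so that the stopped process is Feller; and (b) a Markov-uniqueness statement for $(L^N,C^2_{Neu})$ in order to identify the process built by reflection and gluing with the original Dirichlet-form process $X^N$. Without (b) the construction produces \emph{a} Feller process solving the martingale problem, not the one generated by $\E^N$.

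\textbf{Minor.} Your deduction of Feller from the Lipschitz estimate in the repulsive case (Lipschitz-preservation, density, $L^\infty$-contractivity) is essentially sound, but you should also address strong continuity as $t\downarrow 0$ in sup-norm; in the paper this is absorbed into the ASZ citation.
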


\smallskip  In the proof of Theorem \ref{fellerthm} for the more difficult case $\beta < (N+1)$  we use some
localization arguments to exploit the geometric symmetries of the
problem. A crucial ingredient for this approach is the following
('Markov-')uniqueness result for the operator
 $(L^N,C^2_{Neu})$ which is interesting in its own right.

\begin{proposition} \label{uniqueness} \textit{For $\beta<2(N+1)$, there is at most one
symmetric strongly continuous contraction semigroup $(T_t)_{t\geq
0}$ on $L^2(\Sigma_N, q_N)$ whose generator  $(\mathcal L , \mathcal
D(\mathcal L )) $ extends $(L^N,C^2_{Neu})$.}
\end{proposition}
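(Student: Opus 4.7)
My strategy for proving Proposition \ref{uniqueness} follows the standard reduction of Markov uniqueness to an identification of weighted Sobolev spaces. Let $\mathcal{D}(\E^{\min})$ denote the closure of $C^2_{Neu}$ in the graph norm $(\E^N(\cdot,\cdot)+\|\cdot\|_{L^2(q_N)}^2)^{1/2}$, and let $\mathcal{D}(\E^{\max})=H^{1,2}(\Sigma_N,q_N)$ consist of all $f\in L^2(\Sigma_N,q_N)$ whose distributional gradient lies in $L^2(\Sigma_N,q_N)^N$. Integration by parts on $C^2_{Neu}$ shows that any symmetric strongly continuous contraction semigroup on $L^2(\Sigma_N,q_N)$ whose generator extends $(L^N,C^2_{Neu})$ induces a Dirichlet form $\E$ satisfying $\mathcal{D}(\E^{\min})\subseteq \mathcal{D}(\E)\subseteq \mathcal{D}(\E^{\max})$, with $\E=\E^N$ on $\mathcal{D}(\E^{\min})$. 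The proposition therefore reduces to the density statement $\mathcal{D}(\E^{\min})=\mathcal{D}(\E^{\max})$.

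The decisive analytic input is that the exponent $\alpha:=\beta/(N+1)-1$ appearing in each factor of $q_N$ satisfies $|\alpha|<1$ precisely when $0<\beta<2(N+1)$, which is exactly the classical Muckenhoupt $A_2$-range for the one-dimensional power weight $t\mapsto|t|^{\alpha}$. Since $q_N$ is a product of such one-dimensional power weights acting in linearly independent transverse directions (one for each face of $\overline{\Sigma_N}$), I would first verify that the density of $q_N$ is a Muckenhoupt $A_2$-weight on $\R^N$ locally and uniformly in the location of the ball. The non-trivial case is the one in which several consecutive differences $x^{i+1}-x^i$ vanish simultaneously, near the lower-dimensional boundary strata of $\Sigma_N$; there the individual one-dimensional estimates must be combined carefully, exploiting the near-orthogonality of the singular directions.

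Granted the $A_2$ estimate, I would then invoke the classical density theorems for $A_2$-weighted Sobolev spaces (in the spirit of Chiarenza--Serapioni, Kilpel\"ainen, or Turesson) to obtain density of $C^\infty(\overline{\Sigma_N})\cap H^{1,2}(\Sigma_N,q_N)$ inside $H^{1,2}(\Sigma_N,q_N)$. A final cutoff/reflection step across the faces of $\partial\Sigma_N$ approximates every such smooth function by elements of $C^2_{Neu}$ in the $\E_1^N$-norm, thereby completing the chain and establishing $\mathcal{D}(\E^{\min})=\mathcal{D}(\E^{\max})$. The main obstacle is the verification of the local $A_2$-bound near the corners of $\Sigma_N$: one has to combine the one-dimensional estimates in a genuinely $N$-variable setting and rule out a blow-up of the $A_2$-quotient as the ball approaches a codimension-$\geq 2$ stratum. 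Once this is under control, the weighted Sobolev machinery and a routine cutoff argument finish the proof.
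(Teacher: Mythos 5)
Your proposal matches the paper's argument essentially step for step: both reduce Markov uniqueness to the coincidence of the minimal and maximal weighted Sobolev spaces (Eberle's criterion $H^{1,2}=W^{1,2}$), establish an $A_2$-bound for a suitable extension of the weight, invoke Kilpel\"ainen's mollification result in weighted Sobolev spaces, and finish with a cutoff/reflection step to restore the Neumann condition. The only substantive difference is organizational: the paper first tames the far singularity $(1-x^N)^{\beta/(N+1)-1}$ by replacing $q_N$ near the top of the simplex with the modified weight $\hat q$, extends $\hat q$ to $\R^N$ via the symmetrization map $T$, and uses the maps $H_i$ together with two nested partitions of unity $\{\eta_i\},\{\chi_i\}$ so that every boundary face is reduced to the same model, whereas your sketch leaves this localization (which is exactly what controls the corners you flag as the main obstacle) implicit.
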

\index{\footnote{}}
Hence, together with Theorem \ref{fellerthm} we can state the
following existence and uniqueness result.

\begin{corollary}\textit{
The formal system of Skorokhod SDEs defines via the associated
martingale problem a unique diffusion process which can be started
everywhere on the (closed) simplex $\overline{\Sigma}_N$.}
\end{corollary}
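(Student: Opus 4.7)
The plan is to obtain the corollary as a synthesis of Theorem \ref{fellerthm} and Proposition \ref{uniqueness}, treating existence and uniqueness separately.

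For existence, Theorem \ref{fellerthm} produces a Feller semigroup $(P_t)$ on $C(\overline{\Sigma}_N)$, and standard Feller theory yields a strong Markov family $(\Pp_x)_{x\in\overline{\Sigma}_N}$ with càdlàg paths. Since the underlying Dirichlet form $\E^N$ is strongly local and conservative on the compact state space $\overline{\Sigma}_N$, the associated Hunt process has continuous trajectories, i.e.\ is a diffusion. For any $f \in C^2_{Neu}$, integration by parts against $q_N$—using the Neumann condition on $\partial\Sigma_N$ to kill the boundary term—shows that $f$ lies in the domain of the generator $\mathcal L$ with $\mathcal L f = L^N f$. Dynkin's formula then furnishes the required martingales
\[ M^f_t = f(X_t) - f(X_0) - \int_0^t L^N f(X_s)\, ds, \]
so $(\Pp_x)$ solves the martingale problem for $(L^N, C^2_{Neu})$ and can be started from any $x\in\overline{\Sigma}_N$, including collision configurations on $\partial\Sigma_N$.

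For uniqueness, let $(\tilde{\Pp}_x)$ be any diffusion on $\overline{\Sigma}_N$ solving the same martingale problem, and let $\tilde{P}_t f(x) = \Erw_{\tilde{\Pp}_x}[f(X_t)]$ be its transition semigroup. By compactness of $\overline{\Sigma}_N$, $\tilde{P}_t$ acts as a contraction on $C(\overline{\Sigma}_N)$ and, by density and finiteness of $q_N$, extends to a strongly continuous contraction semigroup on $L^2(\Sigma_N, q_N)$. The martingale identity gives $\tilde{P}_t f - f = \int_0^t \tilde{P}_s L^N f\, ds$ for every $f\in C^2_{Neu}$, so the $L^2$-generator of $\tilde{P}_t$ extends $(L^N, C^2_{Neu})$. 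The same reasoning applied to the adjoint semigroup $\tilde{P}_t^*$, combined with the manifest $q_N$-symmetry of $L^N$ on $C^2_{Neu}$ (by the same integration by parts), shows that $\tilde{P}_t^*$ is also a strongly continuous contraction semigroup on $L^2(\Sigma_N, q_N)$ whose generator extends $(L^N, C^2_{Neu})$. Proposition \ref{uniqueness} then forces $\tilde{P}_t = \tilde{P}_t^* = P_t$, and the strong Markov property propagates equality of one-dimensional marginals to equality of all finite-dimensional distributions, hence $\tilde{\Pp}_x = \Pp_x$.

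The main obstacle is the symmetry step in the uniqueness argument: a priori a martingale-problem solution need not yield a $q_N$-symmetric semigroup, and without symmetry Proposition \ref{uniqueness} does not apply. The route above circumvents this by verifying that $\tilde{P}_t$ and its $L^2$-adjoint both fall under the hypotheses of Proposition \ref{uniqueness} separately, so each is identified with the Feller semigroup and symmetry emerges as a consequence rather than an assumption.
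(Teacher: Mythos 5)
Your existence argument is fine, and you have put your finger on the one genuinely delicate point: Proposition~\ref{uniqueness} identifies only \emph{symmetric} strongly continuous contraction semigroups extending $(L^N,C^2_{Neu})$, while a priori an arbitrary Markovian solution of the martingale problem need not be $q_N$-symmetric. The paper's phrase ``Clearly, any such solution induces a symmetric sub-Markovian semigroup'' is, in context, referring to the two concrete processes $\tilde X$ and $X$ it has constructed, both of which are symmetric by construction as Dirichlet-form processes; read that way, the corollary (uniqueness within the class of $q_N$-symmetric diffusions, equivalently Dirichlet-form processes) really is the one-line synthesis of Theorem~\ref{fellerthm} and Proposition~\ref{uniqueness}. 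Your proposal tries to drop the symmetry restriction, and this is where it breaks.

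The adjoint step does not go through as stated. To place $g\in C^2_{Neu}$ in $D(\tilde A^*)$ with $\tilde A^*g=L^Ng$, you need
\[
\langle g,\tilde A f\rangle_{L^2(q_N)}=\langle L^Ng,f\rangle_{L^2(q_N)}\quad\text{for \emph{all} } f\in D(\tilde A),
\]
whereas the $q_N$-symmetry of $L^N$ on $C^2_{Neu}$ only gives this for $f\in C^2_{Neu}$. Extending from $C^2_{Neu}$ to $D(\tilde A)$ requires $C^2_{Neu}$ to be a core for $\tilde A$ --- which is precisely the uniqueness you are trying to establish. (Equivalently: from the martingale identity you do get $\langle \tfrac{1}{t}(\tilde P_t^*g-g),f\rangle\to\langle L^Ng,f\rangle$ for each fixed $f\in C^2_{Neu}$, but weak convergence of the difference quotients tested against a dense subspace does not give the strong $L^2$ limit that membership in $D(\tilde A^*)$ demands.)

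Even if you grant that $\tilde A^*$ extends $(L^N,C^2_{Neu})$, the conclusion ``Proposition~\ref{uniqueness} then forces $\tilde P_t=\tilde P_t^*=P_t$'' does not follow. Proposition~\ref{uniqueness} is silent about non-symmetric semigroups, so knowing that two (possibly non-symmetric, possibly distinct) semigroups both have generators extending $L^N$ gives you nothing via that proposition. In operator-theoretic terms, a densely defined symmetric operator can have non-self-adjoint m-dissipative extensions $A$ for which $A^*$ also extends the original operator; uniqueness of the self-adjoint (or here, the Dirichlet-form) extension does not exclude these. So your route would require an additional ingredient --- for instance, a genuine $L^1$- or strong-uniqueness result in the sense of Eberle, or a direct argument that any Markovian martingale-problem solution with invariant measure $q_N$ is reversible --- neither of which is supplied by the paper's Proposition~\ref{uniqueness} alone.
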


As for path regularity we obtain the following characterization.

\begin{theorem} \label{thmsemimart} \textit{For any starting point $x\in \overline{\Sigma}_N$, $(X^x_\cdot)\in \overline{\Sigma}_N\subset \R^N$ is a Euclidean
semi-martingale if and only if $\beta/(N+1)\geq 1$.}
\end{theorem}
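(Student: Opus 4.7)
The result is a multivariate version of the classical Bessel dichotomy: $BES(\delta)$ is a Euclidean semimartingale iff $\delta \geq 1$. Our basic tool is Fukushima's decomposition, applicable to every $f$ in the local domain of $\E^N$:
\[ f(X_t) - f(X_0) = M^{[f]}_t + N^{[f]}_t, \]
with martingale additive functional $M^{[f]}$ of quadratic variation $2\int_0^t |\nabla f|^2(X_s)\,ds$ and continuous additive functional $N^{[f]}$ of zero energy. The semimartingale property of $X^x$ is equivalent to $N^{[f_i]}$ being of locally bounded total variation for every coordinate $f_i(x) = x^i$.

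\emph{Sufficiency ($\delta := \beta/(N+1) \geq 1$).} At the boundary $\delta = 1$ the weight $q_N$ is Lebesgue measure up to normalization, the drift in \eqref{generator} vanishes, and $X$ is normally reflected Brownian motion on $\overline{\Sigma}_N$ --- a classical Skorokhod semimartingale. For $\delta > 1$, the weight $\rho_N(x) := \prod_j(x^{j+1}-x^j)^{\delta-1}$ vanishes on the diagonals, so integration by parts of $\E^N(f_i,\varphi)$ against $\varphi \in C^2_{Neu}$ is valid and yields the explicit formula
\[ N^i_t = (\delta - 1)\int_0^t\!\Bigl(\frac{1}{x^i_s - x^{i-1}_s} - \frac{1}{x^{i+1}_s - x^i_s}\Bigr)ds. \]
Absolute integrability of this drift reduces, via Fubini and $q_N$-invariance, to the elementary Dirichlet/Beta estimate $\int_{\Sigma_N}(x^i - x^{i-1})^{-1}q_N(dx) < \infty \iff \delta > 1$. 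Extension from $q_N$-a.e.\ starting point to every $x \in \overline{\Sigma}_N$ follows from the Feller continuity and heat-kernel regularity guaranteed by Theorem~\ref{fellerthm}.

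\emph{Necessity ($\delta < 1$).} Argue by contradiction. If $X^x$ is a semimartingale for some $x$, the Markov property after a small time $\epsilon > 0$, combined with absolute continuity of $p_\epsilon(x,\cdot)$ with respect to $q_N$, yields a set $B$ of positive $q_N$-measure such that $X^y$ is a semimartingale for every $y \in B$. For each such $y$ every gap $Y^i_t := x^{i+1}(X_t) - x^i(X_t)$ inherits a canonical decomposition $Y^i = Y^i_0 + M^{Y^i} + V^{Y^i}$ with $V^{Y^i} \in \BV_{\mathrm{loc}}$ and $\langle Y^i\rangle_t = 4t$. The key step is to expose the density of $V^{Y^i}$ by comparing Itô's formula and Fukushima's decomposition applied to $g_\chi(x) := \chi(x)(x^{i+1}-x^i)^2$, where $\chi \in C^\infty_c(\Sigma_N)$ is a smooth cutoff vanishing in a neighbourhood of every other diagonal $\{x^{j+1} = x^j\}$, $j \neq i$. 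With this choice $g_\chi \in C^2_{Neu}$ and $L^N g_\chi$ is globally bounded --- the prefactor $Y^i$ absorbs the $1/Y^{i\pm 1}$ singularities on the support of $\chi$ --- so the Fukushima BV part of $g_\chi(X)$ is absolutely continuous. Itô's formula expresses the same BV part as $\int 2 Y^i\chi\,dV^{Y^i} + (\text{bounded AC term})$, forcing $Y^i\chi\,dV^{Y^i}$ to be AC with bounded density. Since $\{s : Y^i_s = 0\}$ has Lebesgue time-measure zero (the gap does not linger at $0$), on $\{\chi = 1, Y^i > 0\}$ the measure $V^{Y^i}$ itself is AC with density of order $(\delta - 1)/Y^i_s + O(1)$. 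Hence $V^{Y^i} \in \BV$ would require $\int_0^t (Y^i_s)^{-1}\,ds < \infty$ a.s., whereas by the occupation-times formula combined with strict positivity of the semimartingale local time at $0$ (the gap is of subcritical Bessel type for $\delta < 1$) this integral diverges a.s. Contradiction.

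\emph{Main obstacle.} The delicate point is eliminating the possibility that $V^{Y^i}$ carries a singular BV component supported on $\{Y^i = 0\}$ which exactly cancels the divergent AC drift. The Itô--Fukushima comparison on $g_\chi$, chosen quadratic in $Y^i$ and localized away from the other diagonals, is designed precisely to tame $L^N g_\chi$ at the critical diagonal and expose $V^{Y^i}$ as absolutely continuous. Executing the cutoff uniformly across the $N-1$ complementary diagonals and transferring the contradiction back from $q_N$-a.e.\ to every starting point are the principal sources of technical work, both relying crucially on Theorem~\ref{fellerthm}.
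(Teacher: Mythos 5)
Your sufficiency argument ($\delta\ge 1$) is essentially the paper's: both apply Fukushima's criterion for the zero-energy part of the decomposition of $u(x)=x^i$ to have bounded variation, and reduce it to an explicit integration-by-parts plus a one-dimensional Beta integrability estimate of the type $\int_0^1 r^{\delta-2}\,dr<\infty$. The $\delta=1$ case in the paper is handled the same way (reflecting BM, i.e.\ $q_N=$ Lebesgue measure, and the IBP boundary term is bounded), so no real divergence there.

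Your necessity argument ($\delta<1$), however, takes a genuinely different and much heavier probabilistic route, and there is a real gap in it. The paper's proof stays within Fukushima's Theorem: assuming semimartingality, there must exist a \emph{signed Radon} measure $\nu$ with $\E^N(u,v)=-\int v\,d\nu$. Integration by parts identifies the density of $\nu$ explicitly; near the face $\{x^{i+1}=x^i\}$ it behaves like $(x^{i+1}-x^i)^{\delta-2}$, which is non-integrable on compact neighborhoods of that face when $\delta<1$. Hence $|\nu|$ is not locally finite, contradicting the Radon property. No local times, occupation densities, Itô--Fukushima comparison, or cutoff constructions are needed. Your route, by contrast, tries to read off the drift of the gap $Y^i=x^{i+1}-x^i$ from an Itô--Fukushima comparison applied to $g_\chi=\chi\,(Y^i)^2$. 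That comparison, even granting the technical details, only yields that $Y^i_s\,dV^{Y^i}_s$ is absolutely continuous on $\{\chi=1\}$, so the putative singular part of $V^{Y^i}$ supported on $\{Y^i=0\}$ is simply invisible to this test (it is killed by the factor $Y^i$). You then argue that the \emph{AC part alone} already has infinite total variation because $\int_0^t (Y^i_s)^{-1}\,ds=\infty$ a.s. This is the actual crux, and it is asserted rather than proved: it requires showing that the semimartingale local time of $Y^i$ at $0$ is strictly positive, or equivalently that the occupation density of $Y^i$ near $0$ is bounded away from $0$. Deducing this from the stationary density $\propto a^{\delta-1}$ only yields divergence in \emph{expectation} under $P_{q_N}$; upgrading to a.s.\ divergence is not automatic, and invoking "the gap is of subcritical Bessel type" is circular (it presupposes a Bessel-type identification of a process whose drift is coupled to the neighbouring gaps). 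So while the strategy is plausible and interesting, as written it does not close the argument. The paper's approach entirely sidesteps this by never leaving the analytic side of Fukushima's criterion.

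One minor further point: you write $\chi\in C^\infty_c(\Sigma_N)$, but such $\chi$ vanishes near \emph{every} diagonal including $\{x^{i+1}=x^i\}$, which would destroy the construction; what you intend is a cutoff compactly supported in $\overline\Sigma_N$, equal to $1$ near a portion of the $i$-th face and vanishing near all other faces.
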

In particular we obtain that a Skorokhod decomposition of the
process $X^N_\cdot$ is impossible if $\beta$ is small enough. This
is in sharp contrast to all aforementioned previous works. Moreover,
again due to the uniqueness assertion of Proposition
\ref{uniqueness} the following negative result holds true.

\begin{corollary} \textit{If $\beta/(N+1)< 1$, the system of equations \eqref{bessde1},\eqref{bessde2} is ill-posed, i.e.\ it admits no
solution in the the sense of It\^{o} calculus.}
\end{corollary}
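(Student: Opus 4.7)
The plan is to argue by contradiction, combining the martingale-problem uniqueness from the corollary preceding this one (which rests on Theorem~\ref{fellerthm} and Proposition~\ref{uniqueness}) with the failure of the Euclidean semi-martingale property in Theorem~\ref{thmsemimart}. The overall logic is: an It\^o solution of \eqref{bessde1}--\eqref{bessde2} would automatically be a semi-martingale and, by uniqueness of the martingale problem for $(L^N, C^2_{Neu})$, would have the law of the Feller diffusion $X$ from Theorem~\ref{fellerthm}; for $\beta/(N+1)<1$ this contradicts Theorem~\ref{thmsemimart}.

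More precisely, suppose $(Y_t, (l^i_t)_{i=0}^N)$ is an $\overline\Sigma_N\times \R_+^{N+1}$-valued process defined on some filtered probability space, adapted to a Brownian filtration carrying the driving motions $w^1,\ldots,w^N$, and satisfying \eqref{bessde1} together with the non-negativity and boundary-support conditions \eqref{bessde2}. Reading off the right-hand side of \eqref{bessde1}, $Y$ is by construction a continuous $\R^N$-valued semi-martingale: its canonical decomposition has the singular drift and the local time differences $dl^{i-1}-dl^i$ as finite-variation components and $\sqrt 2\, dw$ as martingale part.

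The next step is to show that $Y$ solves the martingale problem for $(L^N, C^2_{Neu})$. For every $f\in C^2_{Neu}$ apply It\^o's formula to $f(Y_t)$: the bulk terms reproduce $L^N f(Y_s)\,ds$ and a Brownian martingale $\sqrt 2\,\nabla f(Y_s)\cdot dw_s$, while the boundary contributions assemble into integrals of the form $\int_0^t (\nabla f\cdot\nu)(Y_s)\,dl^i_s$, one for each $(N-1)$-face $\{x^i=x^{i+1}\}$ of $\partial\Sigma_N$. By the Neumann condition built into $C^2_{Neu}$ these boundary integrands vanish identically, so $f(Y_t)-f(Y_0)-\int_0^t L^N f(Y_s)\,ds$ is a local martingale for every $f\in C^2_{Neu}$. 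Hence $Y$ is a solution of the $(L^N, C^2_{Neu})$-martingale problem starting from $Y_0$.

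Finally invoke the corollary preceding the present one: Proposition~\ref{uniqueness} combined with Theorem~\ref{fellerthm} says that this martingale problem admits a unique solution from every initial point, namely the Feller diffusion $X$. Consequently the law of $Y$ equals that of $X^{Y_0}$. However, Theorem~\ref{thmsemimart} asserts that for $\beta/(N+1)<1$ the process $X$ fails to be a Euclidean semi-martingale, contradicting the semi-martingale property of $Y$ recorded in the first step. Thus no It\^o solution of \eqref{bessde1}--\eqref{bessde2} can exist in that regime.

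I expect the mildly delicate step to be the It\^o-formula computation, since one must justify that the singular drift is integrable along the paths of $Y$ and that the boundary local-time terms genuinely collapse under the Neumann condition; both follow from standard excursion-theoretic considerations for reflecting semi-martingales once the decomposition from \eqref{bessde1} is in hand. The conceptual core of the proof, however, is the appeal to the martingale-problem uniqueness, which transfers the non-semi-martingale pathology of the Dirichlet-form process to any hypothetical It\^o solution.
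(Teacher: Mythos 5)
Your proof is correct and follows essentially the same route as the paper, which states this corollary as an immediate consequence of Proposition~\ref{uniqueness} (any It\^o solution solves the martingale problem for $(L^N,C^2_{Neu})$, so by Markov uniqueness its law must agree with the Dirichlet-form diffusion $X$) combined with Theorem~\ref{thmsemimart} (for $\beta/(N+1)<1$ that diffusion is not a Euclidean semi-martingale). You have merely made explicit the It\^o-formula step and the identification via the preceding corollary that the paper leaves implicit.
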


Theorems \ref{fellerthm} and \ref{thmsemimart} generalize the
corresponding classical results for the familiy of standard real
Bessel processes, which are proved in a completely different manner,
c.f. \cite{MR1030728,MR1725357}.\\

\textit{Strategy of proof.} While the proof of Theorem
\ref{thmsemimart} consists of a straightforward application of a
regularity criterion for Dirichlet processes by Fukushima
 \cite{MR1741537}, the proof of  Theorem \ref{fellerthm} is
more involved, and entirely different methods are used in the two
respective cases $\beta < N+1$ and $\beta \geq N+1$. Both cases are
non-trivial from an analytic point of view due to the degenerating
coefficients and the Neumann boundary condition.\\ By comparison the
case $\beta \geq (N+1)$ is much easier since $q_N$ is then
log-concave. Using a recent powerful result by Ambrosio, Savar\'e
and Zambotti \cite{asz} on the stability of gradient flows for the
relative entropy functional on Wasserstein the contraction estimate
\eqref{contraction} is established by smooth approximation and coupling.\\
 For
$\beta < (N+1)$ the measure $q_N$ is no longer log-concave and to
prove the Feller property we proceed in four main steps. The first
crucial observation is that the reference measure $q_N$ can locally
be extended to a measure $\hat q$ on the full Euclidean space which
lies in the Muckenhoupt class $\mathcal A_2$, allowing for a rich
potential theory. In particular the Poincar\'e inequality and
doubling condition hold which imply via heat kernel estimates the
regularity of the induced process on $\R^N$.  The second step is the
probabilistic piece of the localization, which corresponds to
stopping the $\R^N$-valued process at domain boundaries where the
measure $\hat q$ is 'tame' and to show that the stopped process is
again Feller. To this end we employ a version of the Wiener test for
degenerate elliptic diffusions by Fabes, Jerrison and Kenig
\cite{MR688024}. The third step is to use the reflection symmetry of
the problem which allows to treat the Neumann boundary condition
indeed via a reflection of the extended $\R^N$-valued process. The
fourth step is to establish the Markov uniqueness which is crucial
in order to justify the identification of the processes after
localization, and here we shall again depend on the nice potential
theory available in the Muckenhoupt class.

\smallskip The partial resemblance of our proof for $\beta < (N+1)$ to the
classical work by Bass and Hsu \cite{MR1106272} is no surprise.
However, we did not find any similar work in the probability
literature where potential theory for Muckenhoupt weights was used
so extensively as in our case. We think that the general strategy
outlined here may be useful also in other, less specific situations.

\section{Dirichlet Form and Integration by Parts Formula}
We start with the rigorous construction of $(X^N_\cdot)$, which departs from the symmetrizing
 measure $q_n$ on ${\Sigma}_N$ defined above in \eqref{def_qn}.
Note that one can identify $L^p(\Sigma_N,q_N)$ with
$L^p(\overline{\Sigma}_N,q_N)$, $p\geq 1$. Throughout the paper
$q_N$ denotes both the measure and its Lebesque density, i.e.\
$q_N(A)=\int_A q_N(x) \, dx$ for all measurable $A\subseteq
\overline{\Sigma}_N$.

 For all $\beta >0$, $N\in \N$, the measure
$q_N$  satisfies the 'Hamza condition', because it has a strictly
positive density with locally integrable inverse, c.f.\ e.g.\
\cite{MR1038449}.\nocite{MR2021192} This implies that the form $
\E^N(f,f)$ with domain $f  \in C^\infty (\overline{\Sigma}_N)$ is
closable on $L^2(\Sigma_N, q_N)$. The $L^2(\Sigma_N, q_N)$-closure
defines a local regular Dirichlet form, still denoted by $\E^N$.
General Dirichlet form theory asserts the existence of a Hunt
diffusion  $(X^N_\cdot)$ associated with $\E^N$ which can be started
in $q_N$-almost all $x\in \overline{\Sigma}_N$ and which is
understood as a generalized solution of the system
\eqref{bessde1},\eqref{bessde2}. This identification is justified by
the fact that any semi-martingale solution solves via It\^{o}'s
formula the martingale problem for the operator $(L^N,D(L^N ))$,
defined in \eqref{generator}.\\

The following integration by parts formula for $q_N$ can be easily
verified (e.g.\ by approximation from corresponding integrals over
increasing sub-domains $\Sigma^\epsilon_N \subset \Sigma_N$).

 \begin{proposition} \label{ibpf}
\textit{
Let $u\in C^1(\overline{\Sigma}_N)$ and $\xi=(\xi^1,\ldots,\xi^N)$ be a vector field in $C^1(\overline{\Sigma}_N, \mathbb{R}^N)$ satisfying  $\langle \xi, \nu \rangle =0$ on $\partial \Sigma_N$, where $\nu$ denotes the outward normal field of $\partial \Sigma_N$.
 Then,
\[
 \int_{\Sigma_N} \langle \nabla u, \xi \rangle q_N(dx)=-\int_{\Sigma_N} u \left[  \divergence(\xi)+ (\tfrac{\beta}{N+1}-1) \sum_{i=1}^{N} \xi^i \left( \frac 1 {x^i - x^{i-1}} - \frac 1 {x^{i+1} - x^{i}} \right)   \right] q_N(dx).
\]
}
\end{proposition}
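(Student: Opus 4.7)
The plan is to carry out the classical integration by parts argument on the non-degenerate approximating domains
\[
\Sigma_N^\epsilon = \{x \in \Sigma_N : x^{i+1} - x^i > \epsilon \text{ for all } i = 0, 1, \ldots, N\}
\]
and then let $\epsilon \to 0$. On each such $\Sigma_N^\epsilon$ the density $q_N$ is smooth and strictly positive, so a standard application of the divergence theorem gives
\[
\int_{\Sigma_N^\epsilon} \langle \nabla u, \xi \rangle q_N \, dx = \int_{\partial \Sigma_N^\epsilon} u \langle \xi, \nu^\epsilon \rangle q_N \, d\sigma - \int_{\Sigma_N^\epsilon} u \, \divergence(\xi q_N) \, dx.
\]
A direct computation of the logarithmic gradient from \eqref{def_qn} yields
\[
\frac{\partial_j q_N}{q_N} = (\tfrac{\beta}{N+1}-1)\left(\frac{1}{x^j - x^{j-1}} - \frac{1}{x^{j+1} - x^j}\right),
\]
which, via $\divergence(\xi q_N) = q_N \, \divergence(\xi) + \langle \xi, \nabla q_N \rangle$, reproduces precisely the bracket on the right hand side of the claimed identity.

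It remains to show that (i) the surface integral over $\partial \Sigma_N^\epsilon$ vanishes as $\epsilon \to 0$ and (ii) the two volume integrals converge to their counterparts on $\Sigma_N$. For (i), the boundary faces introduced by the truncation are of the form $\{x^{i+1} - x^i = \epsilon\}$. On any such face, $q_N$ carries the singular factor $\epsilon^{\beta/(N+1)-1}$, while the integral of the remaining factors over the $(N-1)$-dimensional face is bounded uniformly in $\epsilon$ (it is a truncated Dirichlet-type integral of the same structural form as $q_N$). Since $\xi \in C^1(\overline{\Sigma}_N, \R^N)$ and $\langle \xi, \nu \rangle = 0$ on the corresponding face of $\partial \Sigma_N$, a first order Taylor expansion perpendicular to that face gives $|\langle \xi, \nu^\epsilon \rangle| \leq C \epsilon$. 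The total surface contribution is therefore of order $\epsilon \cdot \epsilon^{\beta/(N+1)-1} = \epsilon^{\beta/(N+1)} \to 0$ for every $\beta > 0$.

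For (ii), the terms involving $\langle \nabla u, \xi \rangle$ and $u \, \divergence(\xi)$ are immediate because their integrands are bounded and $q_N \in L^1(\Sigma_N)$. The only delicate part is the singular sum $\sum_{i=1}^N \xi^i \bigl(\frac{1}{x^i - x^{i-1}} - \frac{1}{x^{i+1} - x^i}\bigr)$, whose individual terms fail to be integrable against $q_N$ as soon as $\beta \leq N+1$. Regrouping the sum by the gaps $g_k = x^{k+1} - x^k$ shows it equals
\[
\frac{\xi^1}{x^1} \,+\, \sum_{k=1}^{N-1} \frac{\xi^{k+1} - \xi^k}{x^{k+1} - x^k} \,-\, \frac{\xi^N}{1 - x^N}.
\]
The boundary condition $\langle \xi, \nu \rangle = 0$ forces $\xi^1$ to vanish on $\{x^1 = 0\}$, $\xi^N$ to vanish on $\{x^N = 1\}$, and $\xi^{k+1} - \xi^k$ to vanish on $\{x^k = x^{k+1}\}$. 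By the $C^1$ regularity of $\xi$ each of these quotients then extends to a bounded function on $\overline{\Sigma}_N$. With the integrand uniformly bounded, dominated convergence applies to the approximating integrals and yields the claimed identity. The main subtlety is precisely this cancellation of singularities across adjacent gaps, without which the right hand side would not even be absolutely integrable.
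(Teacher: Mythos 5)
Your proof is correct and implements precisely the approximation scheme the paper itself sketches: integration by parts on the truncated domains $\Sigma_N^\epsilon$ followed by passage to the limit $\epsilon\to 0$. The two points you flag — that the boundary condition $\langle\xi,\nu\rangle=0$ forces the surface terms to vanish at rate $\epsilon^{\beta/(N+1)}$, and that the apparently non-integrable drift sum becomes bounded after Abel-summation-style regrouping into the quotients $\xi^1/x^1$, $(\xi^{k+1}-\xi^k)/(x^{k+1}-x^k)$, $\xi^N/(1-x^N)$ — are exactly the details the paper leaves unspoken, and you handle both correctly.
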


\begin{remark} \label{rem_ibpf}
 Let $u\in C^1(\overline{\Sigma}_N)$ and $\xi$ be a vector field of the the form $\xi=w \vec\varphi$ with $w\in C^1(\overline{\Sigma}_N)$ and $\vec\varphi(x)=(\varphi(x^1),\ldots, \varphi(x^{N}))$,
$\varphi \in  C^\infty([0,1])$ and $\langle \vec \varphi, \nu
\rangle =0$ on $\partial \Sigma_N$, in particular
$\varphi_{|\partial[0,1]}=0$.  Then, the integration by parts
formula above 
reads
\[
 \int_{\Sigma_N} \langle \nabla u, \xi \rangle q_N(dx)=-\int_{\Sigma_N} u \left[ w V^\beta_{N,\varphi}+\langle \nabla w, \vec \varphi\rangle \right] q_N(dx),
\]
where
\[V_{N,\varphi}^\beta(x^1,\ldots,x^{N}):= (\frac{\beta}{N+1}-1) \sum_{i=0}^{N} \frac{\varphi(x^{i+1})-\varphi(x^i)}{x^{i+1}-x^i}+\sum_{i=1}^{N}\varphi'(x^i).
\]
\end{remark}

Let $C^2_{Neu}=\{ f\in C^2(\overline{\Sigma}_N):\, \langle \nabla f,\nu\rangle=0 \mbox{ on $\partial\Sigma_N$} \}$ as above with $\nu$ still denoting the outer normal field on $\partial \Sigma_N$. Then, for any $f\in C^1(\overline{\Sigma}_N)$ and $g\in C^2_{Neu}$ we apply the integration by parts formula in Proposition~\ref{ibpf} for $\xi=\nabla g$ to obtain
\[
  \E^N(f,g)=-\int_{\Sigma_N} f \, L^N g \, q_N(dx).
\]
Moreover, it is easy to show that
\[
 \left| \E^N(f,g) \right|\leq C \, \| f\|_{L^2(\Sigma_N,q_N)}, \qquad \forall f\in D(\E^N).\]

In particular, $C^2_{Neu}$ is contained in the domain of the generator $\mathcal{L}$ associated with $\E^N$ and $L^Nf=\mathcal{L} f$ for all $f\in C^2_{Neu}$.

\section{Feller Property}
This section is devoted to the proof of Theorem \ref{fellerthm},
where we treat the cases $\beta < (N+1)$ and $\beta \geq (N+1)$
separately. Both cases are not trivial from an analytic perspective
due to the combination of degenerate coefficients and the Neumann
boundary condition. In fact we could not find any general result in
the PDE literature which contains the current model. Our proof for
the case $\beta < (N+1)$ avoids an explicit treatment of the Neumann
boundary condition via a reflection argument. In the case $\beta
\geq  (N+1)$ we use a powerful stability property of log-concave
measures.

\subsection{Case $\beta <  (N+1)$.}
For $\beta < (N+1)$ the measure $q_N$ is no longer log-concave.
However, the proof below extends also to all cases when $\beta <
2(N+1)$. Let \[ \Omega_N:=\Omega^\delta_N:=\overline{\Sigma}_N \cap
\{ x\in \R^{N}: x_{N}\leq 1-\delta \}, \] for some positive small
$\delta <[2(N+2)]^{-1}$ and the weight function
\begin{align} \label{def_hatqn}
 \hat q(x):= \hat q_{N,\delta}(x):=
\begin{cases}
q_N(x) & \text{if $x\in \Omega_N$}, \\
\frac{1}{Z_\beta} {\delta}^{\frac \beta {N+1}-1} \prod_{i=1}^{N}
(x^i-x^{i-1})^{\frac \beta {N+1} -1} & \text{if $x\in S_1 \backslash
\Omega_N$},
\end{cases}
\end{align}
where
\[
 S_1:=\{x\in \R^{N} :\, 0\leq x^1 \leq x^2 \leq \ldots \leq x^{N} \}.
\]
We want to extend the weight function $\hat q$ to the whole $\R^{N}$. To do this we introduce the mapping
\begin{align} \label{def_T}
 T: \R^{N} \rightarrow S_1 \quad  x\mapsto (|x^{(1)}|,\ldots, |x^{(N)}|),
\end{align}
where $(1).\ldots,(N)$ denotes the permutation of $1,\ldots,N$ such that
\[
 |x^{(1)}|\leq\ldots \leq |x^{(N)}|.
\]
The extension of $\hat q$ on $\R^{N}$ is now defined via $\hat q(x)=\hat q(Tx)$, $x\in \R^{N}$. Again we will also denote by $\hat q$ the induced measure on $\R^N$. Consider
the $L^2(\R^{N}, \hat q)$-closure of
\begin{equation}
 \hat\E^{N,a}(f,f) = \int_{\R^{N}}  \langle a \nabla f, a \nabla f(x) \rangle \, \hat q (dx) , \quad f \in C^\infty_c (\R^{N})
\label{globaldirichletform}
\end{equation}
still denoted by $\hat\E^{N,a}$  for a measurable field  $x\mapsto
a(x)\in \R^{N\times N}$ on $\mathbb{R}^N$ satisfying
\begin{align} \label{ass_a}
 \frac 1 c \cdot E_N\leq a(x)^t \cdot a(x) \leq c \cdot E_N
\end{align}
in the sense of non-negative definite matrices.
 Let $(Y_t)_{t\geq 0}=(Y^{N,a}_t)_{t\geq 0}$ be the associated symmetric Hunt process on $\R^{N}$, starting from the invariant distribution $\hat q$.
Finally, we denote by $(Q_t)_t$ the transition semigroup of $Y$.

\subsubsection{Feller Properties of $Y$} Let  $C_0(\mathbb{R}^N)$ be the space of continuous functions on $\R^N$
vanishing at infinity.
\begin{proposition} \label{Yfeller}
\textit{ For $2(N+1)>\beta$ and a matrix $a$ satisfying
\eqref{ass_a}, $Y^{N,a}$ is a Feller process o $\R^N$, i.e.\
\begin{enumerate}
\item[i)] for every $t>0$ and every $f\in C_0(\R^N)$ we have $Q_t f \in C_0(\R^N)$,
\item[ii)] for every $f\in C_0(\R^N)$, $\lim_{t\downarrow 0} Q_t f=f$ pointwise in $\R^N$.
\end{enumerate}
Moreover, $Q_t f\in C(\R^N)$  for every $t>0$ and every $f\in L^2(\R^N,\hat q)$.
}
\end{proposition}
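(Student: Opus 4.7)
The plan is to derive all three Feller statements from the Muckenhoupt $A_2$ potential theory for degenerate divergence form elliptic operators (Fabes-Kenig-Serapioni), together with its parabolic counterparts (Chiarenza-Serapioni, Gutierrez-Wheeden, Sturm). The precise range $0 < \beta < 2(N+1)$ is exactly what makes the weight $\hat q$ belong to $A_2(\R^N)$, so the strategy splits naturally in two: first verify $\hat q \in A_2(\R^N)$, then invoke the standard regularity machinery to read off continuity, decay at infinity, and strong continuity of $(Q_t)$.

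\textbf{Step 1 ($A_2$-property of $\hat q$).} Set $\alpha := \beta/(N+1) - 1$, so that $\alpha \in (-1,1)$ precisely when $0 < \beta < 2(N+1)$. On $\Omega_N$ the density $q_N$ is, up to the normalizing constant, the product $\prod_{i=0}^{N}(x^{i+1}-x^i)^\alpha$ of powers of distances to the facets of $\overline\Sigma_N$; on $S_1\setminus \Omega_N$ the factor involving $x^N$ is frozen by truncation, so $\hat q$ is still comparable on all of $S_1$ to such a product. A single hyperplane-distance weight $\dist(\cdot,H)^\alpha$ lies in $A_2(\R^N)$ iff $\alpha \in (-1,1)$, and a direct computation of the $A_2$-ratio on a ball shows that the product over the transverse hyperplanes $\{x^i = x^{i+1}\}$ is again $A_2$: on any ball only finitely many factors are small and the ratio essentially factorizes into the product of the corresponding one-dimensional ratios. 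Finally, $\hat q$ is extended symmetrically to all of $\R^N$ via the map $T$ in \eqref{def_T}; since $T$ is locally a piecewise isometry that maps a Euclidean ball into finitely many comparable pieces in $S_1$, this reflection preserves $A_2$ with a uniformly bounded constant.

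\textbf{Step 2 (regularity and Feller properties).} With $\hat q \in A_2$ and the uniform ellipticity \eqref{ass_a}, the form $\hat\E^{N,a}$ falls into the Fabes-Kenig-Serapioni class of degenerate divergence form Dirichlet forms. One obtains the doubling property of $\hat q$ and a weighted Poincar\'e inequality on balls, and hence (Chiarenza-Serapioni, Sturm) a parabolic Harnack inequality and two-sided Gaussian bounds for the transition kernel $p_t(x,y)$ of $Y^{N,a}$ with respect to $\hat q$. A standard consequence is the local H\"older continuity of bounded weak parabolic solutions $u(t,x)=Q_tf(x)$ with estimates depending only on the $A_2$- and ellipticity-constants. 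Applied to $f \in L^2(\R^N,\hat q)$ this immediately gives assertion (iii). For (i), given $f \in C_0(\R^N)$, truncation outside a large ball reduces to a bounded compactly supported profile, whose image $Q_tf$ is continuous by (iii) and decays at infinity via the Gaussian upper bound on $p_t$. Claim (ii), pointwise convergence $Q_t f(x) \to f(x)$ as $t\downarrow 0$ for $f\in C_0(\R^N)$, follows from the strong $L^2(\hat q)$-continuity of the Dirichlet semigroup combined with the equicontinuity of $\{(s,y)\mapsto Q_s f(y)\}$ on compact space-time sets furnished by the H\"older estimate.

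\textbf{Main obstacle.} The delicate point is Step 1. The one-hyperplane $A_2$ criterion $\alpha \in (-1,1)$ is classical, but one must carefully control the $A_2$ ratio near the lower-dimensional strata of $\partial \Sigma_N$, where several factors of the product $\prod_i (x^{i+1}-x^i)^\alpha$ vanish simultaneously, and one must check that the reflection extension via the map $T$ does not destroy the $A_2$ constant near the new reflecting hyperplanes. Once these patchings are verified, Step 2 is an application of well-established weighted regularity theory.
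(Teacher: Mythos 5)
Your top-level strategy matches the paper's exactly: reduce everything to showing $\hat q \in \mathcal A_2$, then invoke the weighted regularity theory of Fabes--Kenig--Serapioni and Sturm to get doubling, a Poincar\'e inequality, a parabolic Harnack inequality and hence continuity of $Q_t f$. Your Step~1 sketch of the $A_2$ verification (product structure, change of variables $y_i = x^{i+1}-x^i$ to factorize into one-dimensional $A_2$ ratios, reflection via $T$ preserving the constant) is precisely what the paper carries out carefully via the parallelepiped comparison in Lemma~\ref{lem_parall} and the proof of Lemma~\ref{muckenhoupt}; you are right that the patching near the lower strata and near the reflecting hyperplanes is where the real work lies, and the paper's parallelepipeds are the device that handles it.

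The one place where you depart is statement (ii), and here the paper is both simpler and more robust. You propose to deduce pointwise convergence $Q_t f \to f$ from strong $L^2(\hat q)$-continuity of the Dirichlet semigroup together with equicontinuity of $(s,y)\mapsto Q_s f(y)$ near $t=0$ supplied by the H\"older estimate. But the interior parabolic Harnack inequality gives H\"older estimates (and hence equicontinuity) only on sets of the form $[\epsilon,T]\times K$ with $\epsilon>0$; the constants degenerate as $\epsilon\downarrow 0$. To get equicontinuity down to $t=0$ you would need a parabolic boundary regularity statement at the initial hyperplane for continuous data --- true, but a genuinely extra ingredient that you have not supplied, and not a consequence of the interior Harnack inequality you cite. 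The paper avoids this entirely: since $Y$ is a Hunt diffusion with continuous paths, $Y_t \to Y_0 = x$ almost surely, so for $f\in C_0(\R^N)$ dominated convergence gives $Q_t f(x) = E^x[f(Y_t)] \to f(x)$ directly. This one-line probabilistic argument replaces the analytic estimate you rely on and should be used instead. For (i), your Gaussian-upper-bound argument for decay at infinity is fine; the paper appeals to the same fact via the cited Corollary~4.2 in Sturm's paper.
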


\begin{remark} \label{rem_Y}
It is well known that i) and ii) even imply that $\lim_{t\downarrow 0} \| Q_t f - f \|_\infty=0$ for each $f\in C_0(\mathbb{R}^{N})$.
Moreover, the following version of the strong Markov property holds. 
Let $T$ be a stopping time with $T\leq t_0$ a.s.\ for some $t_0>0$.
Then,  for each  $f\in L^2(\mathbb{R}^{N},\hat q)$
\begin{align*}
E\left[ f(Y_{t_0}) \left| \mathcal{F}_T  \right. \right]=E_{Y_T}\left[ f(Y_{t_0-T}) \right],
\end{align*}
with $(\mathcal{F}_t)_{t\geq 0}$ denoting the natural filtration of $Y$.
\end{remark}

\begin{proof}[Proof of Proposition~\ref{Yfeller}]
ii) follows directly by path-continuity and dominated convergence. i) as well as the additional statement follow from the analytic regularity theory of symmetric diffusions, see \cite{MR1387522}, in particular Theorem 3.5 and Proposition 3.1 and Corollary 4.2,
provided the following two conditions are fulfilled:
\begin{itemize}
 \item The measure $\hat q$ is doubling, i.e.\  there exists a constant $C'$, such that for all Euclidean balls $B_R\subset B_{2R}$
\[
 \hat q(B_{2R})\leq C' \hat q(B_R).
\]
\item $\hat \E$ satisfies a uniform local Poincar\'e inequality, i.e.\ there is a constant $C'>0$ such that
\[
 \int_{B_R} |f-(f)_{B_R}|^2 d\hat q \leq C' R^2 \int_{B_R} |\nabla f|^2 d\hat q,
\]
for all Euclidean balls $B_R$ and $f\in \D(\hat \E)$, where
$(f)_{B_R}$ denotes the integral $\frac 1 {\hat q(B_R)}\int_{B_R} f
d\hat q$.
\end{itemize}
Both conditions are verified once we have proven that the weight function $\hat q$ is contained in the Muckenhoupt class $\mathcal{A}_2$, which will be done in Lemma~\ref{muckenhoupt} below. Indeed, the doubling property follows immediately from the Muckenhoupt condition (see e.g.\ \cite{MR1774162} or \cite{MR869816}) and for the proof of the Poincar\'e inequality see Theorem 1.5 in \cite{MR643158}.
\end{proof}

\begin{lemma}
\label{muckenhoupt} \textit{For $N$ such that $ \beta < 2(N+1)$, we
have $\hat q \in \mathcal{A}_2$, i.e.\ there exists a positive
constant $C=C(N,\delta)$ such that for every Euclidean ball $B_R$
\[
\frac{1}{|B_R|} \int_{B_R} \hat q \,  dx  \, \frac{1}{|B_R|} \int_{B_R} \hat q^{-1} \,  dx\leq C,
\]
where $|B_R|$ denotes the Lebesgue measure of the ball $B_R$.
}
\end{lemma}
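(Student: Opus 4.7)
Setting $\alpha := \beta/(N+1)-1$, the hypothesis $0<\beta<2(N+1)$ together with $\beta>0$ gives $|\alpha|<1$. My plan is to reduce the $\mathcal A_2$ verification on $\R^N$ to the classical 1D fact that $|t|^\alpha\in\mathcal A_2(\R)$ precisely when $|\alpha|<1$, by combining the $G$-invariance of $\hat q$ (for $G=\{\pm1\}^N\rtimes S_N$ the hyperoctahedral group, with fundamental domain $S_1$) with a linear change to gap coordinates on $S_1$.

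The first step is a symmetrization through $T$. The map $T$ is $1$-Lipschitz and its restriction to each chamber $gS_1$ ($g\in G$) is a measure-preserving Euclidean isometry onto $S_1$. For any Euclidean ball $B=B_R(x_0)\subset\R^N$, partitioning $B=\bigsqcup_g(B\cap gS_1)$ and counting preimages gives
\[\int_{T(B)}\hat q\,dy\;\le\;\int_B\hat q\,dx\;\le\;|G|\int_{T(B)}\hat q\,dy,\qquad|T(B)|\le|B|\le|G|\,|T(B)|,\]
and analogously for $\hat q^{-1}$. Hence bounding the $\mathcal A_2$-average over $B$ reduces, up to a constant factor depending only on $|G|=2^N N!$, to bounding the corresponding quantity over $T(B)\subset S_1$, a set of diameter at most $2R$.

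The second step exploits the explicit form of the weight on $S_1$,
\[\hat q(y)\;=\;c_{\beta,N}\,\Psi(y^N)^\alpha\prod_{i=0}^{N-1}(y^{i+1}-y^i)^\alpha,\]
where $\Psi(y^N)=1-y^N$ on $\Omega_N$ and $\Psi(y^N)=\delta$ otherwise; either way $\Psi^\alpha$ is pinched between two positive constants depending only on $\delta$. The linear substitution $u^i=y^{i+1}-y^i$ for $i=0,\dots,N-1$ (with $y^0=0$) defines a unit-Jacobian isomorphism $L:\R^N\to\R^N$ sending $S_1$ bijectively onto the positive orthant, and under $L$ the weight becomes $c\,\tilde\Psi(u)^\alpha\prod_i(u^i)^\alpha$. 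Bounding the integrals over $L(T(B))$ by those over a slightly larger Euclidean $u$-ball and extending the integrand to $\prod_i|u^i|^\alpha$ on all of $\R^N$ (which only enlarges both integrals), we reduce to showing that this tensor-product weight is $\mathcal A_2$. But each factor $|u^i|^\alpha$ is 1D Muckenhoupt $\mathcal A_2$ under $|\alpha|<1$, and tensor products of $\mathcal A_p$ weights in disjoint variables remain $\mathcal A_p$ by a routine box-versus-ball comparison. Pulling the resulting estimate back through the bounded linear map $L$---which preserves $\mathcal A_2$ up to a multiplicative constant---and using that the bounded factor $\tilde\Psi^\alpha$ only perturbs the constant, then combining with the first step, yields the claimed universal bound.

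The main obstacle I anticipate is the careful bookkeeping in the first step, verifying that the comparison of integrals over $B$ and $T(B)$ really goes through with constants controlled by $|G|$ alone (the subtlety is that the sets $T(B\cap gS_1)$ can overlap inside $S_1$, so one has to count preimages rather than simply add up). Once this is in place, the product structure on $S_1$ does all the analytic work, and the sharp assumption $|\alpha|<1$ enters only through the one-dimensional Muckenhoupt criterion.
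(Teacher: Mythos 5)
Your proof is correct and follows essentially the same strategy as the paper's: exploit the hyperoctahedral symmetry to reduce a ball $B$ to $T(B)\subset S_1$, pass to gap coordinates $u^i=y^{i+1}-y^i$, and invoke the one-dimensional fact that $|t|^\alpha\in\mathcal A_2(\R)$ for $|\alpha|<1$. The paper packages the geometric reduction as a separate lemma comparing $B$ to parallelepipeds $P_{lR}(m)\subset S_1$ that are precisely the $L^{-1}$-images of axis-aligned boxes (so that the factorized integral can be evaluated directly), whereas you compare $L(T(B))$ to a Euclidean $u$-ball and cite the tensor-product stability of $\mathcal A_2$; both routes require the same implicit volume lower bound $|T(B)|\geq |B|/|G|$, which your pigeonhole argument supplies.
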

It suffices to prove the Muckenhoupt condition for the weight function $\tilde q$ defined by
\begin{align} \label{def_qtilde}
 \tilde q(x):=\prod_{i=1}^{N} (x^i-x^{i-1})^{\beta/(N+1)-1}, \quad x\in S_1,
\end{align}
and $\tilde q(x):=\tilde q(Tx)$ if $x\in \R^{N}$, since there exist positive constants $C_1$ and $C_2$ depending on $\delta$ and $N$ such that
\begin{align}\label{hat_equiv_tilde}
 C_1 \, \tilde q(x) \leq \hat q(x) \leq C_2 \, \tilde q(x), \qquad \forall x\in \R^{N}.
\end{align}
Note that  $(1-x_{N})^{\frac \beta {N+1}-1}$ is uniformly bounded and bounded away from zero on $\Omega_N$.

Below $P_R(m)$ denotes the parallelepiped in $\R^{N}$ with basis
point $m=(m_1,\ldots,m_{N})$, which is spanned by the vectors
$v_i=\sum_{j=i}^{N} e_j$, $i=1,\ldots, N$, normalized to the length
$R$, where $(e_j)_{j=1,\ldots, N}$ is the canonical basis in
$\R^{N}$. Then, $P_R(m)$ can also be written as
\[
P_R(m)=m+\left\{ x\in \R^{N}: x^1\in \left[0,\tfrac{R}{\sqrt{N}}\right],x^2\in \left[x^1,x^1+\tfrac{R}{\sqrt{N-1}}\right], \ldots, x^{N}\in \left[x^{N-1},x^{N-1}+R\right] \right\}.
\]


\begin{lemma}
 \label{lem_parall}
\textit{ Let $B_R$ be an arbitrary Euclidean ball in $\R^{N}$. Then,
there exists a positive constant $C$ only depending on $N$ and a
parallelepiped $P_{l R}(m) \subset S_1$ with  $l>0$ independent of
$B_R$ such that
\[
 \int_{B_R} \tilde q^{\pm 1} dx \leq C  \int_{P_{l R}(m)} \tilde q^{\pm 1} dx .
\]
}
\end{lemma}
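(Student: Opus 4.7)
The plan is to exploit the invariance of $\tilde q$ under the hyperoctahedral group $G$ of coordinate sign flips and permutations of $\R^N$ (which is built into the definition via $\tilde q(x)=\tilde q(Tx)$), push the ball $B_R$ into $S_1$ by means of $T$, and then cover the resulting set by a parallelepiped of the required shape.

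\textbf{Step 1 (symmetry reduction via $T$).} First I would verify that $T\colon \R^N\to S_1$ is $1$-Lipschitz in the Euclidean norm (componentwise absolute value and sorting are both non-expansive) and that its restriction to each of the $|G|=2^N N!$ closed chambers is a linear isometry onto $S_1$. This has two consequences: $T(B_R(y))\subset B_R(Ty)\cap S_1$, and the pushforward of $\mathrm{Leb}|_{B_R(y)}$ by $T$ has density at most $|G|$ on $S_1$. Using $\tilde q=\tilde q\circ T$, these give
\[
 \int_{B_R(y)}\tilde q^{\pm 1}(x)\,dx = \int_{B_R(y)}\tilde q^{\pm 1}(Tx)\,dx \leq |G|\int_{B_R(z)\cap S_1}\tilde q^{\pm 1}(w)\,dw,
\]
where $z:=Ty\in S_1$. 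The task is thereby reduced to finding one parallelepiped $P_{lR}(m)\subset S_1$ that dominates the two integrals of $\tilde q^{\pm 1}$ over $B_R(z)\cap S_1$.

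\textbf{Step 2 (choice of $m$ and containment).} Writing $g^j:=z^j-z^{j-1}$ for the gaps of $z$ (with $z^0:=0$), I set $\tilde g^j:=\max(0,g^j-2R)\geq 0$ and $m^j:=\sum_{i\leq j}\tilde g^i$. Then $m\in S_1$, and because each spanning vector $v_i=\sum_{j\geq i}e_j$ has nonnegative coordinates, $P_{lR}(m)\subset S_1$ for every $l>0$. For $w\in B_R(z)\cap S_1$, the triangle inequality yields $|w^j-w^{j-1}-g^j|\leq 2R$, which together with $w^j-w^{j-1}\geq 0$ forces $w^j-w^{j-1}\in[\tilde g^j,\tilde g^j+4R]$. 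Expanding $w=m+\sum_i s^i v_i$ with $s^j:=(w^j-w^{j-1})-\tilde g^j\in[0,4R]$ then shows $w\in P_{lR}(m)$ as soon as $4R\leq lR/\sqrt{N-j+1}$ for all $j$, i.e.\ for $l\geq 4\sqrt N$. Combining with Step~1 delivers the lemma with $C=2^N N!$ and $l=4\sqrt N$, both depending only on $N$.

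\textbf{Expected main obstacle.} The bookkeeping of Step~1 is where I anticipate spending the most effort: carefully identifying the closed chambers on which $T$ acts as an isometry onto $S_1$, verifying that $T$ is globally $1$-Lipschitz despite being only piecewise linear, and controlling the pushforward density uniformly by $|G|$. Once these three facts are in place, Steps~2 and~3 reduce to a routine one-dimensional coordinate computation.
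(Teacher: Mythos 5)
Your proof is correct, and it takes a route that overlaps with the paper's in spirit (both exploit the decomposition of $\R^N$ into the $2^N N!$ chambers $S_n$ on which $T$ acts as a linear isometry onto $S_1$) but differs in the mechanics in a way that is arguably cleaner. The paper splits into the two cases $|M|\le 2R$ and $|M|>2R$, in the first case enclosing $B_{4R}(0)\cap S_1$ in $P_{4R}(0)$ and in the second case asserting a comparison of chamber integrals and then choosing (non-constructively) a parallelepiped containing $K=T(B_R(M)\cap S_{n_0})$. You instead observe once and for all that $T$ is globally $1$-Lipschitz, so $T(B_R(y))\subset B_R(Ty)\cap S_1$, which together with the isometry-on-chambers and the $\ge 0$ sign of the integrand gives the single estimate $\int_{B_R(y)}\tilde q^{\pm1}\,dx\le 2^N N!\int_{B_R(Ty)\cap S_1}\tilde q^{\pm1}\,dw$ with no case distinction; the pushforward-density remark is a correct (if slightly redundant) repackaging of the same chamber count. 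Your Step~2 then makes the covering parallelepiped explicit via the clipped gaps $\tilde g^j=\max(0,g^j-2R)$: for $w\in B_R(z)\cap S_1$ one indeed has $w^j-w^{j-1}\in[\tilde g^j,\tilde g^j+4R]$, so $B_R(z)\cap S_1\subset P_{lR}(m)\subset S_1$ once $l\ge 4\sqrt N$. This explicit choice is a genuine improvement in rigor over the paper's terse ``choose $P_{2R}(m)$ such that $K\subseteq P_{2R}(m)\subset S_1$'' (the paper's stated $l=2$ appears too small in general, although this is immaterial for the Muckenhoupt bound since only uniform boundedness of $l$ is used). In short: same overall strategy, but your version unifies the two cases and produces an explicit $m$ and explicit constants $C=2^N N!$, $l=4\sqrt N$.
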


\begin{proof}
We denote by $(S_n)$, $n=1,\ldots, 2^{N}\cdot N!$, the subsets of $\R^{N}$ taking the form
\[
 S_n=\{ x\in \R^{N}: \, (s_1 \pi_1(x),\ldots, s_{N} \pi_{N}(x)) \in S_1\},
\]
where $s_i \in \{-1,1\}$, $i=1,\ldots,N$, and $\pi(x)=( \pi_1(x),\ldots,  \pi_{N}(x))$ is a permutation of the components of $x$. Note that $\R^{N}=\bigcup_n S_n$ and the intersections of the sets $S_n$ have zero Lebesgue measure.

Let now $B_R(M)$ be an arbitrary Euclidean ball with radius $R$
centered in $M\in \R^{N}$. We first consider the case $|M|\leq 2R$.
Then, obviously $B_R(M)\subset B_{4R}(0)$ and $B_{4R}(0) \cap
S_1\subset P_{4R}(0)$. Hence,
\begin{align*}
 \int_{B_R(M)} \tilde q^{\pm 1} dx \leq  \int_{B_{4R}(0)} \tilde q^{\pm 1} dx= \sum_n \int_{B_{4R}(0)\cap S_n} \tilde q^{\pm 1} dx.
\end{align*}
By the definition of $\tilde q$ we have
\[
 \int_{B_{4R}(0)\cap S_n} \tilde q^{\pm 1} dx=\int_{B_{4R}(0)\cap S_1} \tilde q^{\pm 1} dx
\]
for all $n\in \{1,\ldots, 2^{N} \cdot N!\}$. Thus,
\begin{align*}
 \int_{B_R(M)} \tilde q^{\pm 1} dx \leq 2^{N} \cdot N!  \int_{B_{4R}(0)\cap S_1} \tilde q^{\pm 1} dx \leq  C  \int_{P_{4R}(0)} \tilde q^{\pm 1} dx .
\end{align*}
Suppose now that $|M|>2R$. Let $n_0$ be such that $M\in S_{n_0}$. Then, by construction of $\tilde q$ we have
\[
 \int_{B_{R}(M)\cap S_n} \tilde q^{\pm 1} dx \leq \int_{B_{R}(M)\cap S_{n_0}} \tilde q^{\pm 1} dx
\]
for all $n=1,\ldots, 2^{N}\cdot N!$. Hence,
\begin{align*}
 \int_{B_R(M)} \tilde q^{\pm 1} dx = \sum_n \int_{B_{R}(M)\cap S_n} \tilde q^{\pm 1} dx \leq 2^{N} \cdot  N!  \int_{B_R(M)\cap S_{n_0}} \tilde q^{\pm 1} dx .
\end{align*}
Set $K:=T(B_R(M) \cap S_{n_0}) \subset S_1$ with $T$ defined as above. Then, it is clear by definition of $\tilde q$ that
\[
 \int_{B_R(M)\cap S_{n_0}} \tilde q^{\pm 1} dx = \int_{K} \tilde q^{\pm 1} dx
\]
and we get
\[
  \int_{B_R(M)} \tilde q^{\pm 1} dx \leq C  \int_{K} \tilde q^{\pm 1} dx .
\]
Finally, we choose a parallelepiped $P_{2R}(m)$ such that $K\subseteq P_{2R}(m) \subset S_1$, which completes the proof.
\end{proof}

\begin{proof}[Proof of Lemma \ref{muckenhoupt}]
We prove the Muckenhoupt condition for $\tilde q$. Recall that we
have assumed $\beta<2(N+1)$. In the following the symbol $C$ denotes
a positive constant depending on $N$ and $\delta$ with possibly
changing its value from one occurence to another. Using Lemma
\ref{lem_parall} we have
\begin{align*}
\frac{1}{|B_R|^2} \int_{B_R} \tilde q \,  dx  \,  \int_{B_R} \tilde q^{-1} dx
\leq & C R^{-2N} \int_{P_{l R}(m)} \tilde q \,  dx  \,  \int_{P_{l R}(m)} \tilde q^{-1} dx \\
=& C R^{-2N} \int_{P_{l R}(m)} \prod_{i=1}^{N} (x^i-x^{i-1})^{\beta/(N+1)-1} \,  dx  \\
& \times  \int_{P_{l R}(m)} \prod_{i=1}^{N} (x^i-x^{i-1})^{-(\beta/(N+1)-1)} dx.
\end{align*}
By the change of variables $y_i=x^i-x^{i-1}$, $i=1,\ldots, N$, we obtain
\begin{align*}
\frac{1}{|B_R|^2} \int_{B_R} \tilde q \,  dx  \,  \int_{B_R} \tilde q^{-1} dx
\leq  & C R^{-2N} \prod_{i=1}^{N} \int_{\tilde m_i}^{\tilde n_i} y_i^{\frac{\beta}{N+1}-1} dy_i \int_{\tilde m_i}^{\tilde n_i} y_i^{-(\frac{\beta}{N+1}-1)} dy_i,
\end{align*}
where we have set $\tilde m_i:=m_i-m_{i-1}$ with $m_0:=0$ and $\tilde n_i:=\tilde m_i+\tfrac{l R}{\sqrt{N+1-i}}$ for abbreviation.
Recall that in one dimension the weight function $x\mapsto|x|^\eta$ on $\R$ is contained in $\mathcal{A}_2$ if $\eta \in (-1,1)$ (see p.\ 229 and p.\ 236 in \cite{MR869816}). Hence, we get for every $i\in \{1,\ldots, N\}$
\begin{align*}
 \int_{\tilde m_i}^{\tilde n_i} y_i^{\frac{\beta}{N+1}-1} dy_i \, \int_{\tilde m_i}^{\tilde n_i} y_i^{-(\frac{\beta}{N+1}-1)} dy_i\leq C \left| \frac{lR}{\sqrt{N+1-i}} \right|^2,
\end{align*}
and the result follows.

\end{proof}

\subsubsection{Feller Properties of $Y$ inside a Box} Let
$E:=E^\delta:=\{ x\in \mathbb{R}^{N}: \, \| x\|_\infty <1-2\delta
\}$ be a box in $\mathbb{R}^{N}$ centered in the origin and
$\mathcal{B}(E)$ be the Borel $\sigma$-field on $E$. We denote by
\[ \
\tau_E:=\inf\{ t>0: \, Y_t \in E^c \}
\]
 the first exit time of $E$. This subsection is devoted to the proof of the Feller properties for the stopped process $Y^{E}_t:=Y^{N,a,E}_t:=Y^{N,a}_{t\wedge \tau_E}$, whose transition semigroup is given by
\[
 Q_t^E f(x)=E^x[f(Y_{t\wedge \tau_E})]=E^x\left[f(Y_{t}) \indicator_{\{t<\tau_E\}} +f(Y_{\tau_E}) \indicator_{\{t\geq\tau_E\}}  \right], \qquad t>0, \, x\in \bar E,
\]
for every bounded $f$ on $\bar E$. In order to prove the Feller property
we will follow essentially the proof of Theorem~13.5 in \cite{MR0193671}. 
It is shown there that the Feller properties are preserved, if the domain is regular in the following sense.

\begin{proposition} \label{reg_E}
\textit{
The domain $E$ is regular, i.e.\ for every $z\in \partial E$ we have $P^z[\tau_E=0]=1$.
}
\end{proposition}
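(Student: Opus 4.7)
The plan is to invoke the Wiener-type boundary regularity criterion of Fabes, Jerison and Kenig \cite{MR688024} for degenerate elliptic operators with Muckenhoupt $\mathcal A_2$ weights. By Lemma~\ref{muckenhoupt} we have $\hat q\in \mathcal A_2$, and $a^{t}a$ is uniformly elliptic by \eqref{ass_a}, so the form $\hat\E^{N,a}$ falls exactly into their framework. The criterion then characterizes the probabilistic regularity $P^z[\tau_E=0]=1$ by the divergence of a Wiener sum in terms of $\hat q$-weighted relative capacities:
\[
\sum_{n\geq 1}\frac{\capacity_{\hat q}\bigl(E^c\cap \bar B_{2^{-n}}(z),\,B_{2^{1-n}}(z)\bigr)}{\capacity_{\hat q}\bigl(B_{2^{-n}}(z),\,B_{2^{1-n}}(z)\bigr)} \;=\; \infty.
\]

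Next, I exploit that $E$ is an axis-aligned cube, so every boundary point $z\in\partial E$ admits an exterior half-space. Concretely, there exist $i\in\{1,\dots,N\}$ and $\sigma\in\{\pm 1\}$ with $\sigma z^{i}=1-2\delta$ such that $H_z:=\{x\in\R^N:\sigma x^{i}\geq 1-2\delta\}$ is contained in $E^c$. Consequently, $E^c\cap \bar B_r(z)\supset H_z\cap \bar B_r(z)$ is always a full half-ball of radius $r$ about $z$.

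The principal step, and the one I expect to be the main technical obstacle, is the scale-invariant capacity density estimate
\[
\capacity_{\hat q}\bigl(H_z\cap \bar B_r(z),\,B_{2r}(z)\bigr)\;\geq\; c\,\capacity_{\hat q}\bigl(B_r(z),\,B_{2r}(z)\bigr)
\]
with $c>0$ independent of $z$ and of small $r>0$. This is the standard ``uniformly fat complement is regular'' estimate in Muckenhoupt potential theory and follows from the doubling of $\hat q$ together with the scale-invariant weighted Poincar\'e inequality, both of which are available here (see the proof of Proposition~\ref{Yfeller}) and are precisely the tools developed in \cite{MR688024}. Once this bound is in hand, every term of the Wiener sum is bounded below by $c$, the sum diverges and $z$ is regular. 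Since $z\in\partial E$ was arbitrary, the proposition follows. Note that no Neumann-type considerations enter at this stage, because $Y$ is defined on all of $\R^N$ and the exit from the cube $E$ is a purely interior issue for the extended process.
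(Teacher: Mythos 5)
Your proposal uses the same Fabes--Jerison--Kenig Wiener test as the paper, but the route through it is genuinely different. The paper's proof explicitly computes the growth of the weighted ball measure, $\hat q(B_r(z))\asymp r^{h(z)}$ with $h(z)=N-d(z)+\tfrac{\beta}{N+1}d(z)$ (Lemma~\ref{comp_parall}), and then splits into cases: when $h'(z)=1-h(z)>-1$ the integral in criterion a) converges outright (no geometry of $E^c$ is needed at all), and when $h'(z)\le -1$ one applies criterion b), lower-bounding the capacity of $K_\rho(z)$ by that of a ball $B_{\rho/2}(\hat z)$ with $d(\hat z)=d(z)$ and using the explicit capacity estimate \eqref{est_cap}. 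Your approach instead bypasses the case split entirely by observing that $E$ is a cube, so $E^c$ contains a half-space at every boundary point, and invokes a uniform capacity density (``corkscrew'') estimate for the resulting half-balls to make every term in the Wiener series bounded below. That is conceptually cleaner and avoids the power counting altogether.

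The catch is in what you yourself flag as the principal step. You attribute the half-ball capacity density estimate to ``doubling together with the scale-invariant weighted Poincar\'e inequality,'' but that is not quite enough. Doubling controls $\hat q(B_{2r})$ versus $\hat q(B_r)$, not $\hat q$ of a half-ball versus the full ball; the half-ball could in principle carry a vanishingly small fraction of $\hat q(B_r)$ under mere doubling. What saves you is the stronger $\mathcal A_\infty$ measure-density property (any measurable $S\subset B$ with $|S|\ge\tfrac12|B|$ has $\hat q(S)\ge c\,\hat q(B)$), which does hold because $\mathcal A_2\subset\mathcal A_\infty$, and one then needs a weighted isocapacitary-type inequality to pass from measure density to capacity density. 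These facts are true and available in the nonlinear weighted potential theory literature (Heinonen--Kilpel\"ainen--Martio, Turesson~\cite{MR1774162}), but they are not contained in the tools you actually cite from \cite{MR688024} or from the proof of Proposition~\ref{Yfeller}. So as written the central inequality is asserted rather than derived; the paper's more computational approach sidesteps the need for this general machinery, at the cost of the explicit case analysis on $h'(z)$. If you supply the $\mathcal A_\infty$-to-capacity-density chain with a precise reference, your argument closes and is arguably more robust, since it makes no use of the precise exponent $h(z)$ beyond what is already baked into the Muckenhoupt property.
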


\begin{remark}
 \label{rem_reg}
$z\in \partial E$ is regular point in the sense of the definition given in Proposition~\ref{reg_E} if and only if for every continuous function $f$ on $\partial E$
\[
\lim_{E\ni x\rightarrow z} E^x[f(Y_{\tau_E})\indicator_{\{\tau_E <\infty \}}]=f(z).
\]
For the Brownian case we refer to Theorem 2.12 in \cite{MR1121940}. 
The arguments are robust, but it is required that under $P^x$ the probability of the event that the exit time of a ball centered at $x$ does not exceed $t$ is arbitrary  small uniformly in $x$ for a suitable chosen small $t>0$. In our situation this property is ensured by \cite{MR960535}, p.\ 330.
\end{remark}

The proof of this proposition will be based on the following Wiener
test established by Fabes, Jerison and Kenig, c.f. \cite[Thorem
5.1]{MR688024}.
\begin{theorem} \label{wiener_test}
\textit{
 Let $B_{R_0}(0)$ be a large ball centered in zero such that $E\subset B_{R_0/4}(0)$. Then, a point $z\in \partial E$ is regular if and only if
\begin{itemize}
 \item [a)] $\int_0^{R_0} \frac{s^2}{\hat q (B_s(z))} \, \frac{ds}{s} <\infty$, or
\item [b)] $\int_0^{R_0} \capacity(K_\rho) \frac{\rho^2}{\hat q (B_\rho(z))} \, \frac{d\rho}{\rho} =\infty$,
\end{itemize}
where $K_\rho(z):=(B_{R_0}(0) \backslash E)\cap B_\rho(z)$ and $\capacity$ denotes the capacity associated with the Dirichlet form $\E^{N,a}$.
}
\end{theorem}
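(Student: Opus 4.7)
The plan is to adapt the classical Wiener-test strategy of Littman--Stampacchia--Weinberger and Maz'ya to degenerate elliptic operators whose reference measure is a Muckenhoupt $\A_2$ weight. By Blumenthal's $0$--$1$ law applied to the Hunt process $Y^{N,a}$, the event $\{\tau_E = 0\}$ has $P^z$-probability $0$ or $1$, so regularity of $z$ reduces to a quantitative comparison of the hitting probability of $E^c$ near $z$ with an explicit geometric/capacity quantity on dyadic scales.

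First I would set up the potential-theoretic dictionary. Since $\hat\E^{N,a}$ is a regular local Dirichlet form on $L^2(\R^N,\hat q)$, Fukushima's theory identifies the equilibrium potential of a compact set $K$ with the hitting probability $x\mapsto P^x[\sigma_K<\infty]$, and its $\hat\E^{N,a}$-energy with $\capacity(K)$. The crucial input --- and where Lemma~\ref{muckenhoupt} enters --- is the Fabes--Kenig--Serapioni theory for operators with $\A_2$ weights, which yields a scale-invariant Harnack inequality, H\"older continuity of solutions, and two-sided Green function bounds of the form
\[
G(x,y)\;\asymp\;\int_{|x-y|}^{R_0}\frac{s^{2}}{\hat q(B_s(x))}\,\frac{ds}{s}.
\]
From these, standard comparison arguments give $P^z[Y\text{ hits }K_\rho(z)\text{ before exiting }B_{R_0}(0)] \asymp G(z,z_\rho)\cdot\capacity(K_\rho(z))$, where $z_\rho$ is a representative point in $K_\rho(z)$.

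Second, I would dyadically decompose around $z$: set $r_n=2^{-n}R_0$ and $K^{(n)}=K_{r_n}(z)\setminus B_{r_{n+1}}(z)$. Summing the hitting-probability estimates, applying a second-moment (Paley--Zygmund) inequality as in Maz'ya's proof, and combining with a Borel--Cantelli argument on the events $\{Y\text{ enters }K^{(n)}\text{ before leaving }B_{r_{n-1}}(z)\}$ shows that $P^z[\tau_E=0]=1$ is equivalent to the divergence of the series $\sum_n \capacity(K^{(n)})\cdot r_n^2/\hat q(B_{r_n}(z))$. The usual geometric-series-to-integral translation then recovers condition (b). Condition (a) handles the singular alternative: if $\int_0^{R_0} s^2/\hat q(B_s(z))\,ds/s<\infty$ then, by the Green function bound above, $G(\cdot,z)$ is bounded near $z$, the operator is ``transient at $z$'' in the weighted sense, and any starting path immediately charges a neighbourhood of $z$ outside $E$.

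The main technical obstacle is the weighted Green function estimate. In the Lebesgue case this follows from the explicit fundamental solution $|x-y|^{2-N}$, but here one has to push the De Giorgi--Nash--Moser machinery through the $\A_2$-weighted Sobolev framework, which is precisely the contribution of Fabes--Jerison--Kenig. Once the Harnack inequality, H\"older estimate and two-sided Green function bound are available, the Wiener-series argument follows the classical template without further surprises.
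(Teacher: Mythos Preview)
The paper does not prove Theorem~\ref{wiener_test} at all: it is stated as a citation of \cite[Theorem~5.1]{MR688024} (Fabes--Jerison--Kenig) and then \emph{applied} to verify Proposition~\ref{reg_E}. So there is no ``paper's own proof'' to compare against; you have written a sketch of the proof of an external result that the authors simply quote.

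That said, your outline of the Fabes--Jerison--Kenig argument is broadly on target. The key analytic inputs are exactly the ones you name: the Fabes--Kenig--Serapioni theory for $\mathcal A_2$ weights supplies the scale-invariant Harnack inequality and the two-sided Green function estimate $G(x,y)\asymp\int_{|x-y|}^{R_0} s\,\hat q(B_s(x))^{-1}\,ds$, and from there a dyadic Wiener-series argument in the spirit of Littman--Stampacchia--Weinberger/Maz'ya yields the criterion~(b). One imprecision worth flagging: your treatment of alternative~(a) is slightly garbled. Finiteness of $\int_0^{R_0} s\,\hat q(B_s(z))^{-1}\,ds$ does make $G(\cdot,z)$ bounded near $z$, but the relevant consequence is not ``transience at $z$'' in any usual sense; rather, bounded Green function forces singletons (hence arbitrarily small compacta) near $z$ to carry capacity bounded away from zero, so the Wiener series in~(b) diverges automatically for any $z\in\partial E$, and regularity follows. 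You should phrase~(a) as the degenerate case in which the capacity lower bound trivialises the series, not as a separate mechanism involving ``immediate charging''.
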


In the sequel we will use the following notation
\[
 d(x):=\left| \left\{ i\in \{1,\ldots, N\}: x^i=x^{i-1} \right\}\right|, \qquad x\in \mathbb{R}^{N},
\]
again with the convention $x^0:=0$. In order to prove the regularity of $E$ we start with a preparing lemma.
\begin{lemma} \label{comp_parall}
\textit{
Let $y\in \mathbb{R}^{N}$ be arbitrary.}
\begin{itemize} \item[ i)] \textit{ There exists a positive $r_0=r_0(y)$ such that for all balls $B_r(y)$, $0<r\leq r_0$, there exists a parallelepiped $P_{l r}(\bar y)$  contained in $S_1$ with $d(\bar y)=d(y)$, a positive constant $C$ and $l>0$, such that}
\[
  \hat q(B_r(y)) \leq C \hat q(P_{l r}(\bar y)).
\]
\item[ii)] \textit{ For all balls $B_r(y)$ there exists parallelepiped $P_{l r}(\bar y)$  contained in $S_1$ with $d(\bar y)=d(y)$, for some $l>0$ such that  $\hat q(P_{l r}(\bar y)) \leq \hat q(B_r(y))$.}

\item[iii)] \textit{ For all $y$  we have $ C_1 \, r^{h(y)} \leq \hat q(B_r(y))$ for all $r$ and  $\hat q(B_r(y)) \leq C_2 \, r^{h(y)}$ for all $r\leq r_0(y)$
for some positive constants $C_1$ and $C_2$ depending on $y$, where}
\[
 h(y):=N-d(y)+ \tfrac \beta {N+1} \, d(y).
\]
\end{itemize}

\end{lemma}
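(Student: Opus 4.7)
The plan is to reduce all three parts to explicit integral computations on parallelepipeds in $S_1$, exploiting the symmetry $\tilde q \circ T = \tilde q$ to bring the center of any ball into the ordered simplex. By the equivalence \eqref{hat_equiv_tilde} it suffices to work with $\tilde q$. Since $d$ is invariant under the reordering-reflection map $T$, after replacing $y$ by $Ty$ one may assume $y \in S_1$; the index set $I := \{i : y^i = y^{i-1}\}$ then has cardinality $d(y)$ and encodes the collision pattern of $y$.

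For (i), I would take the threshold $r_0(y)$ to be a small fraction of $\min\{y^i - y^{i-1} : i \notin I\}$ so that for every $r \leq r_0(y)$ and every $x \in B_r(y)$, the indices $i \notin I$ still satisfy $x^i - x^{i-1} > 0$; the ball $B_r(y)$ then stays within the single stratum determined by $I$. Repeating the reflection-and-cover argument of Lemma~\ref{lem_parall} produces a parallelepiped $P_{lr}(\bar y) \subset S_1$ dominating $\int_{B_r(y)} \tilde q\,dx$, and the refinement is that $\bar y$ can be chosen with $\bar y^i - \bar y^{i-1} = 0$ precisely for $i \in I$, preserving $d(\bar y) = d(y)$. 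For (ii), I would conversely take $\bar y$ with the same collision pattern as $y$ (e.g.\ $\bar y^i - \bar y^{i-1} = y^i - y^{i-1}$), positioned so that a parallelepiped $P_{lr}(\bar y)$ of diameter at most $r$ fits inside $B_r(y) \cap S_1$, with $l = l(N)$ chosen via the norms of the spanning vectors $v_i$.

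For (iii), the change of variables $z_i = x^i - x^{i-1}$ turns the integral of $\tilde q$ over $P_{lr}(\bar y)$ into a product of one-dimensional integrals of the form $\int_{\bar z_i}^{\bar z_i + lr/\sqrt{N+1-i}} z_i^{\beta/(N+1)-1}\,dz_i$ with $\bar z_i := \bar y^i - \bar y^{i-1}$. For $i \in I$ the lower limit is zero and this integral is of order $r^{\beta/(N+1)}$ (using $\beta/(N+1) > 0$); for $i \notin I$ the integrand is bounded above and below on an interval of length proportional to $r$, so the integral is of order $r$. Multiplying the $N$ factors yields $\hat q(P_{lr}(\bar y)) \asymp r^{(N-d(y)) + d(y)\beta/(N+1)} = r^{h(y)}$, and combining with (i) and (ii) gives the two-sided bound; the upper bound uses the restriction $r \leq r_0(y)$ to keep the ``regular'' indices bounded away from the singular stratum.

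The main obstacle is the construction in (i): after applying $T$ to the part of $B_r(y)$ outside $S_1$ and taking the union with $B_r(y) \cap S_1$, one must verify that the resulting subset of $S_1$ is contained in a single parallelepiped whose basis $\bar y$ has $d(\bar y) = d(y)$. The smallness of $r_0(y)$ is essential here, since for larger $r$ the ball could cross several faces of $S_1$ of different codimension and force the parallelepiped basis onto a lower-dimensional stratum, artificially increasing $d(\bar y)$ and destroying the matching of $h$-exponents in (iii).
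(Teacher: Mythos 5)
Your proposal follows the paper's argument essentially step for step: reduce to $\tilde q$ via \eqref{hat_equiv_tilde}, pull the ball into $S_1$ using the fold map $T$ and the case distinction from Lemma~\ref{lem_parall}, choose $r_0(y)$ small enough that the reflected image of $B_r(y)$ has the same collision pattern as $y$ (so $d(\bar y)=d(y)$ for the enclosing/enclosed parallelepiped), and then evaluate $\tilde q(P_{lr}(\bar y))$ by the change of variables $z_i=x^i-x^{i-1}$, with the factors for $i\in I$ contributing $r^{\beta/(N+1)}$ and those for $i\notin I$ contributing $r$. The only cosmetic differences are that you make the choice of $r_0$ explicit (a fraction of $\min\{y^i-y^{i-1}:i\notin I\}$) where the paper just says ``possibly after choosing a smaller $r_0$'', and you spell out the one-dimensional integral asymptotics where the paper invokes the mean value theorem; both are the same computation.
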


\begin{proof}
Obviously, due to \eqref{hat_equiv_tilde} it suffices to prove the lemma with $\hat q$ replaced by $\tilde q$ defined in \eqref{def_qtilde}.

i) The case $y=0$ is clear. For $y\not= 0$ we choose $r_0$ such that $2r_0\leq |y|$ and let $n_0$ be such that $y\in S_{n_0}$. Consider now an arbitrary ball $B_r(y)$ with $r\leq r_0$ and let $K=T(S_{n_0}\cap B_r(y))$ be the subset of $S_1$ constructed in the second part of the proof of Lemma \ref{lem_parall}. Then, possibly after choosing a smaller $r_0$  we find a parallelepiped and a positive constant $l$ such that
$K\subset P_{l r}(\bar y) \subset S_1$ with $d(\bar y)=d(y)$ and we obtain i).

\noindent ii) Let $K=T(S_{n_0}\cap B_r(y))$ be defined as in i). Then, clearly $\tilde q (B_r(y)) \geq \tilde q(K)$. Thus, we can choose $\bar y=Ty$ and $l$ independent of $r$ such that $P_{l r}(\bar y)\subseteq K$ and ii) follows.

\noindent
iii) We proceed similar as in the proof of Lemma \ref{muckenhoupt}.  For some parallelepiped $P_{l r}(\bar y)$ with $d(\bar y)=d(y)$ we use a change of variables  to obtain
\begin{align*}
 \tilde q(P_{l r}(\bar y))=\int_{P_{l r}(\bar y))} \prod_{i=1}^{N} (x^i-x^{i-1})^{\frac \beta {N+1} -1} \, dx=\prod_{i=1}^{N} \int_{\tilde y_i}^{\tilde y_i+c_i r} z_i^{\frac \beta {N+1} -1} \, dz_i,
\end{align*}
where $\tilde y_i=\bar y_i - \bar y_{i-1}$,  $\bar y_0:=0$ and $c_i:=\frac l {\sqrt{N+1-i}}$ . Note that $d(\bar y)=d(y)$ is the number of components of $\tilde y$ which are equal to zero. Using the mean value theorem we obtain that
\[
 C_1 \, r^{h(y)} \leq \hat q(P_{l r}(\bar y)) \leq C_2 \, r^{h(y)}
\]
for some positive constants $C_1$ and $C_2$ depending on $y$, so that iii) follows from i) and ii).
\end{proof}

\begin{proof} [Proof of Proposition \ref{reg_E}]
 Let $z\in \partial E$ be fixed and $R_0$ be as in the statement of Theorem \ref{wiener_test}. Setting $h'(z):=1-h(z)$ let us first consider the case $h'(z)>-1$. Then, using Lemma \ref{comp_parall} iii) we have that
\[
 \int_0^{R_0} \frac{s^2}{\hat q (B_s(z))} \, \frac{ds}{s} \leq C  \int_0^{r_0} s^{h'(z)} ds + \frac 1 {2\hat q (B_{r_0}(z))} (R_0^2 -r_0^2)  < \infty,
\]
with $r_0=r_0(z)$ as above in Lemma \ref{comp_parall} iii).
Thus, the criterion a) in Theorem \ref{wiener_test} applies and the regularity of $z$ follows. The case $h'(z)\leq-1$ is more difficult. Combining Lemma 3.1 in \cite{MR643158} and Theorem 3.3 in \cite{MR643158} and using Lemma \ref{comp_parall}~iii) we get the following estimate for the capacity of small balls:
\begin{align} \label{est_cap}
 \capacity(B_r(y))\simeq \left( \int_r^{R_0}  \frac{s^2}{\hat q (B_s(y))} \, \frac{ds}{s}  \right)^{-1}\geq C \left( \int_r^{R_0}  s^{h'(y)} ds \right)^{-1}.
\end{align}
Recall the definition of the set $K_\rho(z)$ in Theorem \ref{wiener_test}. Clearly, for every $\rho$ sufficiently small there exists a ball $B_{\rho/2}(\hat z)$ with $\hat z$ depending on $\rho$ and with $d(\hat z)=d(z)$ such that $B_{\rho/2}(\hat z)\subset K_\rho(z)$.
Let now $r_0$ be such that Lemma \ref{comp_parall}~iii) and \eqref{est_cap} hold for every ball $B_\rho(z)$, $\rho\leq r_0$. Then, we obtain in the case $h'(z)< -1$
\begin{align*}
 \int_0^{R_0} \capacity(K_\rho) \frac{\rho^2}{\hat q (B_\rho(z))} \, \frac{d\rho}{\rho}
& \geq
C \int_0^{r_0} \capacity(B_{\rho/2}(\hat z)) \rho^{h'(z)} \, d\rho
\geq
C \int_0^{r_0} \left( \int_{\rho/2}^{R_0}  s^{h'(\hat z)} ds \right)^{-1} \rho^{h'(z)} \, d\rho \\
& =
C \, (h'(z)+1) \int_0^{r_0/2} \frac{\rho^{h'(z)}}{R_0^{h'(y)+1}-\rho^{h'(y)+1}} \, d\rho  \\
& =
C \int_0^{r_0/2}
\left( -\log(|R_0^{h'(y)+1}-\rho^{h'(y)+1}|) \right)^{'} d\rho=\infty.
\end{align*}
Finally, if $h'(z)=-1$ we get by an analogous procedure
\begin{align*}
 \int_0^{R_0} \capacity(K_\rho) \frac{\rho^2}{\hat q (B_\rho(z))} \, \frac{d\rho}{\rho}
& \geq
C \int_0^{r_0/2} \frac{1}{\rho (\log R_0-\log\rho)} \, d\rho \\
&=
C \int_0^{r_0/2}
\left( -\log(\log R_0-\log\rho) \right)^{'} d\rho=\infty.
\end{align*}
Hence, applying the criterion b) of Theorem \ref{wiener_test}  completes the proof.
\end{proof}

\begin{proposition} \label{feller_killed}
 \textit{
$Y^{E}$ is a Feller process, i.e.
\begin{enumerate}
\item[i)] for every $t>0$ and every $f\in C(\bar E)$ we have $Q_t^E f \in C(\bar E)$,
\item[ii)] for every $f\in C(\bar E)$, $\lim_{t\downarrow 0} Q_t^E f=f$ pointwise in $\bar E$.
\end{enumerate}
}
\end{proposition}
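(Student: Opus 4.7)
The plan is to combine three ingredients already in place: the Feller property of the ambient process $Y$ on $\mathbb{R}^{N}$ (Proposition \ref{Yfeller}), the regularity of every point of $\partial E$ (Proposition \ref{reg_E}), and the equivalent boundary characterization of regularity recalled in Remark \ref{rem_reg}. Together these put us exactly in the classical setting of Dynkin's stopping theorem (Theorem 13.5 in \cite{MR0193671}), so the overall strategy is to rerun that proof for our stopped process and verify that each analytic input still holds here.

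Part (ii) is immediate. If $x\in E^\circ$, the path-continuity of $Y$ gives $\tau_E>0$ $P^x$-almost surely, hence $Y_{t\wedge\tau_E}\to x$ as $t\downarrow 0$ and $Q_t^E f(x)\to f(x)$ by dominated convergence. If $z\in\partial E$, Proposition \ref{reg_E} yields $\tau_E=0$ $P^z$-a.s., so under $P^z$ one has $Y_{t\wedge\tau_E}\equiv z$ and therefore $Q_t^E f(z)=f(z)$ for every $t>0$.

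For part (i), fix $t>0$, $f\in C(\bar E)$, and a sequence $x_n\to x$ in $\bar E$. I would decompose
\[
Q_t^E f(x_n)=E^{x_n}\!\bigl[f(Y_t)\indicator_{\{t<\tau_E\}}\bigr]+E^{x_n}\!\bigl[f(Y_{\tau_E})\indicator_{\{\tau_E\le t\}}\bigr],
\]
and treat two cases. If $x\in E^\circ$, I would use that $\tau_E$ is lower semi-continuous in the starting point together with the fact that $\{\tau_E=t\}$ has $P^x$-measure zero (which follows by the absolute continuity of the exit-time distribution, a consequence of the heat-kernel regularity built into Proposition \ref{Yfeller}); the Feller property of $Y$ and the strong Markov property recorded in Remark \ref{rem_Y} then yield convergence of both terms to their values at $x$. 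If $x\in\partial E$, the boundary characterization of regularity in Remark \ref{rem_reg} shows that the second term converges to $f(x)$, and it remains to show that the first term vanishes, i.e.\ that $P^{x_n}[\tau_E>t]\to 0$.

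The crux of the argument, and the step I expect to be the main obstacle, is precisely this last point. To force $P^{x_n}[\tau_E>t]\to 0$ when $x_n\to x\in\partial E$ I would invoke the uniform short-time exit-probability estimate from \cite{MR960535} already cited in Remark \ref{rem_reg}: it guarantees that, uniformly in a neighborhood of $\partial E$, the probability of not having left a small ball (hence of not having left $E$) within time $t$ can be made arbitrarily small. Coupling this with the regularity of $\partial E$ from Proposition \ref{reg_E} handles the first term, and combined with the convergence of the second term from Remark \ref{rem_reg} one obtains $Q_t^E f(x_n)\to f(x)=Q_t^E f(x)$. Putting the interior and boundary cases together gives $Q_t^E f\in C(\bar E)$ and concludes (i).
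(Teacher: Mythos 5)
Your part (ii) is fine and essentially matches the paper. The issues are in part (i), where you depart from the paper's route and the substitutions do not hold up.

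For the interior case, the paper does not try to pass to the limit in your decomposition directly. Instead it exploits the semigroup identity $Q_t^E f(x)=E^x[\psi_s(Y_{s\wedge\tau_E})]$ with $\psi_s=Q_{t-s}^E f$ and compares this to $Q_s\psi_s(x)$, which \emph{is} known to be continuous by Proposition~\ref{Yfeller}, obtaining $|Q_t^E f - Q_s\psi_s|\le 2\|f\|_\infty P^\cdot[\tau_E\le s]$; the right side goes to zero uniformly on compacts of $E$ by Lemma~\ref{lem_tauE}, and that lemma is the piece of machinery that makes the whole interior argument work. Your proposal never invokes Lemma~\ref{lem_tauE} and instead leans on two claims that are not available: absolute continuity of the exit-time law (you attribute this to the heat-kernel regularity in Proposition~\ref{Yfeller}, but that proposition only gives continuity of $Q_t f$ for $L^2$-data, not regularity of the hitting-time distribution), and a lower-semicontinuity-in-the-starting-point argument for $\tau_E$, which would in turn require weak convergence of $P^{x_n}$ to $P^x$ on path space, something you have not established. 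As written, your treatment of $E^{x_n}[f(Y_t)\indicator_{\{t<\tau_E\}}]$ for interior $x$ is circular: it amounts to assuming the continuity you are trying to prove.

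For the boundary case, the key step (with which the paper agrees) is showing $P^{x_n}[t<\tau_E]\to 0$ as $x_n\to z\in\partial E$. You propose to get this from the Stroock short-time estimate of \cite{MR960535}, but that estimate goes the \emph{wrong way}: it says the process stays in a small ball around its starting point with high probability for small $t$, i.e.\ it bounds the probability of early exit from above. It gives you no lower bound on the speed of exit from $E$ near the boundary, and so cannot force $P^{x_n}[t<\tau_E]\to 0$. The tool the paper actually uses is Dynkin's Lemma~13.1, which gives upper semicontinuity of $x\mapsto P^x[t<\tau_E]$, so that $\limsup_{x\to z}P^x[t<\tau_E]\le P^z[t<\tau_E]=0$ by the regularity of $z$ from Proposition~\ref{reg_E}. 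This semicontinuity statement is what you need to replace the Stroock estimate with.
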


The statement is classical and can be found e.g.\ in Theorem~13.5 in
\cite{MR0193671}. For readability we repeat the argument here.
We shall need the following lemma. 
\begin{lemma} \label{lem_tauE}
\textit{
 For any compact set $K\subset E$ we have
\[
 \lim_{t\downarrow 0} \sup_{x\in K}  P^x[\tau_E \leq t]=0.
\]
}
\end{lemma}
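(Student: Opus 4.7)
The plan is to localize the problem: for each $x\in K$ the exit time from $E$ dominates the exit time from a small Euclidean ball around $x$ that lies entirely inside $E$, and then to invoke a uniform small-time estimate for exit times from such balls. Since $K$ is compact and $E$ is open, $d_0 := \dist(K,\partial E) > 0$, so $B_{d_0}(x)\subset E$ for every $x\in K$, which immediately gives
\[
\sup_{x\in K} P^x[\tau_E \leq t] \;\leq\; \sup_{x\in K} P^x\bigl[\tau_{B_{d_0}(x)} \leq t\bigr].
\]

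Thus the whole statement is reduced to showing that the right-hand side tends to $0$ as $t\downarrow 0$. This is precisely the type of estimate made available by the potential theory developed for Dirichlet forms whose reference measure lies in $\mathcal A_2$. Specifically, Lemma~\ref{muckenhoupt} places $\hat q$ in $\mathcal A_2$, which, as already exploited in Proposition~\ref{Yfeller}, delivers both the doubling condition and the uniform local Poincar\'e inequality; these in turn imply the parabolic Harnack inequality and Gaussian-type heat-kernel estimates for the process $Y$, from which a uniform bound of the form
\[
\sup_{x\in\R^N} P^x\bigl[\tau_{B_{r}(x)} \leq t\bigr]\;\leq\; C\exp\!\bigl(-c\,r^2/t\bigr)
\]
for small $t$ can be read off. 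This is exactly the estimate alluded to in Remark~\ref{rem_reg} and proved in \cite{MR960535}, p.~330, so I would simply cite it there. Applying the bound with $r=d_0$ uniformly over $x\in K$ yields $\sup_{x\in K} P^x[\tau_{B_{d_0}(x)}\leq t]\to 0$ as $t\downarrow 0$.

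The main point of the argument is that no detailed information about the boundary of $E$ or the behaviour of $\hat q$ near $\partial E$ is needed, because $K$ is safely in the interior and the sub-Gaussian small-time exit estimate holds uniformly in the base point of the ball. Consequently the only nontrivial ingredient is the global availability of the Gaussian-type heat-kernel bound, which is the principal payoff of the Muckenhoupt property established in Lemma~\ref{muckenhoupt}; once that is in hand, the proof is a one-line consequence of compactness of $K$.
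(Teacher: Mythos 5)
Your proof is correct but takes a genuinely different route from the paper's. The paper proves the lemma ``softly'': it picks a bump function $f\in C_0(\R^N)$ with $f=1$ on $K$ and $f=0$ on $E^c$, then uses only two ingredients already established in Proposition~\ref{Yfeller} and Remark~\ref{rem_Y}, namely the uniform convergence $\lim_{t\downarrow 0}\|Q_tf-f\|_\infty=0$ and the strong Markov property of $Y$. One writes $E^x[f(Y_{t_0})]\leq P^x[\tau_E\geq t_0]+E^x[f(Y_{t_0})\indicator_{\{\tau_E<t_0\}}]$, bounds the left side below by $1-\delta/2$ for $x\in K$ and the second term on the right above by $\delta/2$ via conditioning on $\mathcal F_{\tau_E}$ and using $f(Y_{\tau_E})=0$. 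No heat-kernel estimate enters at all; the whole argument is an immediate consequence of the Feller property of the unstopped process and strong Markov, which is why the paper can cite the classical Dynkin reference.

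Your proof, by contrast, is ``hard'': you reduce to a uniform sub-Gaussian small-time bound on exit times from Euclidean balls. This works and is in fact the tool the paper itself invokes, via \cite{MR960535}, in Remark~\ref{rem_reg}, but for a different purpose (establishing the equivalence in the characterization of boundary regularity). Your route buys a quantitative rate $C\exp(-cd_0^2/t)$ rather than mere $o(1)$, but at the cost of relying on heat-kernel machinery for the degenerate $\mathcal A_2$-weighted form. One caveat you should note: \cite{MR960535} is stated for uniformly elliptic divergence-form operators; the process $Y$ here is degenerate, so strictly speaking the appropriate reference for the uniform exit-time estimate in the $\mathcal A_2$ setting is the parabolic Harnack/heat-kernel theory of Sturm \cite{MR1387522} or Fabes--Kenig--Serapioni \cite{MR643158}, driven by the uniform doubling and Poincar\'e constants established via Lemma~\ref{muckenhoupt}. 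The paper has the same citation looseness in Remark~\ref{rem_reg}, so this does not distinguish your proof from theirs, but it is worth being aware of. Apart from that, your one-line compactness reduction $\sup_{x\in K}P^x[\tau_E\leq t]\leq\sup_{x\in K}P^x[\tau_{B_{d_0}(x)}\leq t]$ is clean and correct.
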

\begin{proof}
We need to show that for any $\delta>0$ there exists a $t_0>0$ such that
\begin{align} \label{est_px_tauE}
\inf_{x\in K}  P^x[\tau_E\geq t_0]\geq 1-\delta.
\end{align}
Consider a bounded function $f\in C_0(\mathbb{R}^N)$ such that $0\leq f \leq 1$, $f=1$ on $K$ and $f=0$ on the complement of $E$. Let now $t_0$ be such that $\sup_{t\leq t_0} \| Q_tf -f \|_\infty < \delta/2$ (cf.\ Remark~\ref{rem_Y}). Then,
\begin{align*}
E^x[f(Y_{t_0})] \leq P^x [\tau_E \geq t_0] + E^x[f(Y_{t_0}) \, \indicator_{\{ \tau_E<t_0\} }].
\end{align*}
For $x\in K$ the left hand side is equal to
\begin{align*}
 Q_{t_0}f(x)=1+Q_{t_0}f(x)-f(x)\geq  1- \sup_{t\leq t_0} \| Q_tf -f \|_\infty \geq 1- \frac \delta 2.
\end{align*}
On the other hand, using the strong Markov property (cf.\ again Remark~\ref{rem_Y}) and the fact that $f(Y^N_{\tau_E})=0$ we have
\begin{align*}
E^x[f(Y_{t_0}) \, \indicator_{\{ \tau_E<t_0\} }]&=E^x\left[ E^x[f(Y_{t_0})\left| \mathcal{F}_{\tau_E} \right. ]  \, \indicator_{\{ \tau_E<t_0\} }\right]
=E^x\left[ Q_{t_0-\tau_E}f(Y_{\tau_E}) \, \indicator_{\{ \tau_E<t_0\} }\right] \\
&=E^x\left[ \left( Q_{t_0-\tau_E}f(Y_{\tau_E})-f(Y_{\tau_E}) \right) \, \indicator_{\{ \tau_E<t_0\} }\right]
\leq E^x\left[ \sup_{t \leq t_0} \| Q_t f-f \|_\infty \, \indicator_{\{ \tau_E<t_0\} }\right] \\
& \leq \frac \delta 2,
\end{align*}
and \eqref{est_px_tauE} follows.
\end{proof}

\begin{proof} [Proof of Proposition~\ref{feller_killed}]
Let $t>0$ and $f\in C(\bar E)$. Then, by the semigroup property of $(Q_t^E)$ we have for $0<s<t$
\[
 Q_t^Ef(x)=E^x\left[\psi_s(Y_{s\wedge\tau_E})   \right],
\]
where
\[
 \psi_s(x)=Q^E_{t-s}f(x)=E^x\left[f(Y_{(t-s)\wedge \tau_E})  \right], \qquad x\in \bar E.
\]
Then, $\psi_s$ can be extended to a function in $L^2(\mathbb{R}^N,\hat q)$ and by Proposition~\ref{Yfeller} we have $Q_s \psi_s\in C_b(\mathbb{R}^N)$. Since
\[
 \left| Q_t^Ef(x)-Q_s \psi_s(x)\right|= \left| E^x[\psi_s(Y_{s\wedge\tau_E})-\psi_s(Y_s)] \right| \leq 2 \, \| \psi_s\| \,  P^x[\tau_E\leq s]  \leq 2 \,  \| f\| \, P^x[\tau_E\leq s]
\]
and since the right hand side converges to zero uniformly in $x$ on
every compact subset of $E$ by Lemma~\ref{lem_tauE}, we conclude
that $ Q_t^Ef\in C_b(E)$, i.e.\
 $Q_t^Ef$ is continuous in the interior of $E$. In order to show i) it suffices to verify continuity at the boundary. Since $E$ is regular, we have obviously $Q_t^Ef =f$ on $\partial E$.
By Lemma~13.1 in \cite{MR0193671} we have upper semicontinuity of the mapping $x\mapsto P^x[t<\tau_E]$. Hence, we obtain for $z\in \partial E$,
\begin{align*}
 \limsup_{x\rightarrow z} P^x[t<\tau_E] \leq P^z[t<\tau_E]=0,
\end{align*}
where we have used the regularity of $E$ in Proposition~\ref{reg_E}. Thus, for every $x\in E$
\begin{align*}
 \left| Q_t^E f(x) -f(z) \right|
 \leq &
\left| Q_t^{E}f(x)-E^x[f(Y_{\tau_E}) \indicator_{\{\tau_E <\infty \}}] \right| + \left| E^x[f(Y_{\tau_E}) \indicator_{\{\tau_E <\infty \}}]-f(z) \right| \\
\leq & \left| E^x\left[ f(Y_t) \indicator_{\{ t<\tau_E\}} + f(Y_{\tau_E}) \indicator_{\{ t \geq \tau_E\}}-f(Y_{\tau_E}) \indicator_{\{\tau_E <\infty \}} \right] \right| \\
&+\left| E^x[f(Y_{\tau_E}) \indicator_{\{\tau_E <\infty \}}]-f(z) \right|\\
\leq &\|f\|_\infty P^x[t<\tau_E]+   \left| E^x\left[ f(Y_{\tau_E}) \indicator_{\{\tau_E <\infty \}} \left( \indicator_{\{ t \geq \tau_E\}}-1 \right) \right] \right| \\
 \leq &  2 \|f\|_\infty P^x[t<\tau_E]+\left| E^x[f(Y_{\tau_E})\indicator_{\{\tau_E <\infty \}}]-f(z) \right|,
\end{align*}
where we have used the fact that the event $\{\tau_E<\infty \}$ is included in $\{ t\geq \tau_E\}$.
Since $z$ is a regular point, the second term tends to zero as  $x\rightarrow z$, $x\in E$, cf.\ Remark~\ref{rem_reg}. Hence,
\[
 \lim_{E\ni x\rightarrow z}Q_t^E f(x)=f(z)
\]
and property i) is proven.

To prove ii), we extend $f$ to a function in $C_0(\mathbb{R}^N)$, i.e.\ we may deduce from Proposition~\ref{Yfeller} that $\lim_{t\downarrow 0} Q_t f(x)=f(x)$ for every $x\in E$. Furthermore, we have for every $x\in E$
\[
 | Q_t^E f(x)-Q_t f(x) | \leq  P^x[t \leq \tau_E] \, \|f\|_\infty,
\]
and $P^x[\tau_E=0]=0$, since $E$ is open and $Y$ has continuous paths. Hence,
\[
\lim_{t\downarrow 0} Q^E_t f(x)= \lim_{t\downarrow 0} Q_t f(x)=f(x).
\]
This gives pointwise convergence on $E$. Since $Q_t^Ef =f$ on $\partial E$ by regularity, the convergence on $\partial E$ is trivial.
\end{proof}

\subsubsection{Feller Properties of $X^N$} In this section we will
finally prove the Feller property for $X^N$ stated in
Theorem~\ref{fellerthm}.  To this end we construct a Feller process
$\tilde X$ taking values in $\overline{\Sigma}_N$ and in a second
step we will identify this process with $X^N$.
\smallskip

In analogy to the definition of $\Omega_N$ above we set
\begin{align}\label{omegaidef}
\Omega_i:= \Omega_i^\delta:=\{ x\in \overline{\Sigma}_N:\, x^{i+1}-x^i \geq \delta \}, \qquad i=0,\ldots,N.
\end{align}
and moreover
\[
 A^i:=\partial \Omega^{2\delta}_i \backslash \partial \Sigma_N=\{ x\in\overline{\Sigma}_N:\, x^{i+1}-x^i=2 \delta\}.
\]
Notice that we can choose $\delta$ so small that $\overline{\Sigma}_N=\bigcup_{i=1}^N \Omega^{2\delta}_i$.
Furthermore, we define the mappings $H_i$, $i=0,\ldots,N$, by
\begin{align*}
 H_i(x):=\left( x^1,\ldots, x^i, 1-(x^N-x^i), 1-(x^{N-1}-x^i), \ldots, 1-(x^{i+1}-x^i) \right), \qquad x\in \overline{\Sigma}_N.
\end{align*}
Notice that for every $i$, $H_i$ maps $\Omega_N$ on $\Omega_i$ and
vice versa. In particular, $H_i \circ H_i=\id$ and $H_N$ is the
identity on $\Omega_N$. Let $T: \bar E\rightarrow \Omega_N^{2\delta}\subset\Omega_N$ be
defined as in \eqref{def_T}. Let $Y^i$ denote the $\R^N$-valued
Feller process induced from the form \eqref{globaldirichletform}
with $a= a_i$, where the matrix valued function $ a$ is defined
$\hat q$-almost everywhere by
\begin{equation}
 a_i(x):= \left\{
\begin{array}{ll}
 DH_i^{\rm t}  & \mbox{for } x \in \mathring S_1 \\
DH_i^{\rm t} \cdot (DT_{|S_n}^{-1})^{\rm t} &  \mbox{for } x \in \mathring S_n,
\,n\in \{2, \cdots 2^N\cdot N!\},
 \end{array} \right.
\label{matrixchoice} \end{equation}
 such that condition \eqref{ass_a} is clearly satisfied (note that $DH_i=DH_i^{-1}$). In particular, $a_i$ is constant on every $S_n$. Moreover, setting $\rho_i:=H_i\circ T$, clearly $a_i=(D\rho_i^{-1})^{\rm t}$.

\begin{remark} \label{ibpf_Y}
 Analogously to Proposition~\ref{ibpf} one can establish the following integration by parts formula associated to $\E^{N,a_i}$. Let $u\in C^1(\R^N)$ and $\xi$ a continuous vector field on $\R^N$ such that $\supp \xi \subseteq \{ x\in\R^N: \|x\|_\infty <1-\delta\}$, $\xi$ is continuously differentiable in the interior of each $S_n$  and for every $n$ $\xi$ satisfies the boundary condition $\langle a_i^t \cdot a_i \cdot \xi, \nu_n \rangle=0$ on $\partial S_n$, $\nu_n$ denoting the outward normal field of $\partial S_n$. Then,
\[
 \int_{\R^N} \langle a_i \nabla u, a_i \xi \rangle \hat q(dx)=-\int_{\R^N} u \left[ \divergence(a_i^t\cdot a_i \cdot \xi)+\langle a_i^t\cdot a_i \cdot \xi,\nabla \log \hat q  \rangle \right] \hat q(dx).
\]
Thus, every smooth function $g$ such that $\xi=\nabla g$ satisfies the above conditions is contained in the domain of the generator $L^i$ associated to $\E^{N,a_i}$
and on $\R^N \setminus \bigcup_n \partial S_n$ we have
\[
 L^i g=\divergence(a_i^t\cdot a_i \cdot \nabla g)+\langle a_i^t\cdot a_i \cdot \nabla g,\nabla \log \hat q  \rangle .
\]

\end{remark}

Next we
 define the $\Omega_i^{2\delta}$-valued process $\tilde X^{i}$ by
\[
\tilde X^{i}_t=\rho_i(Y_t^{i,E}) =H_i\circ T(Y_t^{i,E}), \qquad t\geq 0.
\]
The semigroup of $\tilde X^i$ will be denoted by $(\tilde
P^i_t)_{t\geq 0}$, i.e.\ $ \tilde P^i_tf(x)=E^x[f(\tilde X^i_t)]$.

\begin{lemma}
\textit{
 For every $i$, $\tilde X^{i}$ is Markovian.
}
\end{lemma}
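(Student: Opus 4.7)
The plan is to exploit the reflection/permutation symmetry built into the definitions of $\hat q$, $T$ and the matrix field $a_i$, and then invoke a classical criterion (often attributed to Dynkin) under which an image of a Markov process is itself Markov. First I would introduce the finite group $G\subset O(N)$ generated by coordinate permutations and sign flips. The sectors $S_n$ from the proof of Lemma~\ref{lem_parall} are the $G$-translates of the fundamental domain $S_1$; the map $T\colon\R^N\to S_1$ is exactly the quotient map $\R^N\to\R^N/G$, and on each $S_n$ it agrees with the linear isometry $\pi_n\in G$ that sends $S_n$ to $S_1$. By construction $\hat q$ is $G$-invariant and $\rho_i=H_i\circ T$ satisfies $\rho_i\circ\sigma=\rho_i$, so the fibres of $\rho_i$ are precisely the $G$-orbits.

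The heart of the argument is to verify that the full Dirichlet form $\hat\E^{N,a_i}$ is $G$-invariant. After the change of variables $y=\sigma x$ (which preserves $\hat q$), this reduces to the pointwise identity
\[
\sigma^t\,(a_i^t a_i)(\sigma x)\,\sigma=(a_i^t a_i)(x),\qquad x\in\R^N,\ \sigma\in G.
\]
Using $a_i(x)=DH_i^t\,(DT^{-1})^t$, the fact that $H_i$ is affine (so $DH_i$ is a constant matrix), and the relation $\pi_{n'}\sigma=\pi_n$ whenever $\sigma(S_n)=S_{n'}$, this becomes a direct matrix computation. The specific form of $a_i$ chosen in \eqref{matrixchoice} is tailored precisely so that this identity holds sector by sector.

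With $G$-invariance of the form in hand, the associated $L^2$-semigroup, and then by the Feller property in Proposition~\ref{Yfeller} the transition kernel of $Y^i$ itself, commutes with the $G$-action: $Q_t(\sigma y,\sigma A)=Q_t(y,A)$. Since $E$ is $G$-invariant, the exit time $\tau_E$ is $G$-equivariant and the same symmetry passes to $(Q_t^E)$. Setting $\psi_s(y):=Q_s^E(f\circ\rho_i)(y)$ for bounded measurable $f$ on $\Omega_i^{2\delta}$, the identity $\rho_i\circ\sigma=\rho_i$ yields $\psi_s(\sigma y)=\psi_s(y)$, so $\psi_s$ factors as $\Psi_s\circ\rho_i$ for some measurable $\Psi_s$ on $\Omega_i^{2\delta}$. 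Combined with the Markov property of $Y^{i,E}$, this gives
\[
E\bigl[f(\tilde X^i_{t+s})\,\bigm|\,\mathcal F_t^{Y^{i,E}}\bigr]=\psi_s(Y^{i,E}_t)=\Psi_s(\tilde X^i_t),
\]
and a further conditioning on the smaller filtration $\mathcal F_t^{\tilde X^i}\subset\mathcal F_t^{Y^{i,E}}$ yields the Markov property of $\tilde X^i$, with transition semigroup $\tilde P_s^i=\Psi_s$.

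The conceptual step needing the most care is the passage from the $L^2$-level $G$-invariance of $(Q_t)$ to a pointwise statement about transition kernels robust enough to support the Dynkin-style argument. The interfaces $\bigcup_n\partial S_n$ across which $a_i$ is only measurable form a $\hat q$-null set and are harmless at the level of the quadratic form, but pointwise symmetry of the kernels relies on the Feller continuity from Proposition~\ref{Yfeller} together with the fact that $Y^i$ spends no positive time on this null set; this is the one spot where some analytic caution is warranted.
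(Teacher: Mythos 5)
Your proposal is correct and takes essentially the same approach as the paper: both exploit the invariance of $\hat q$ and of the field $a_i^t a_i$ under the finite reflection/permutation group whose orbits are the fibres of $\rho_i=H_i\circ T$, conclude that the transition kernel of $Y^i$ (and hence of $Y^{i,E}$, since $E$ is invariant) is constant on these fibres, and then invoke the Dynkin-type criterion that a function of a Markov process is itself Markov when the transition kernels factor through the map. The paper phrases the criterion as condition \eqref{markovcondition} and reduces first to the unstopped process, whereas you work directly with the stopped one and spell out the factorisation $\psi_s=\Psi_s\circ\rho_i$; these are cosmetic differences, and your extra caution about passing from $L^2$-level invariance to pointwise kernel symmetry is exactly what Proposition~\ref{Yfeller} supplies.
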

\begin{proof}
Since $H_i$ is an injective mapping for every $i$, it suffices to
show that the process $T(Y^{i,E}_\cdot)$ is Markovian. Moreover,
since $T(Y^{i,E}_t) =  T(Y^i_{t\wedge \tau_E}) = (T(Y^i_\cdot))_{t
\wedge \tau_E}$ it is enough to  prove the Markov property for the
process $T(Y^i_\cdot)$, which is implied e.g.\  by the condition
that for any Borel set $A\subseteq \overline{\Sigma}_N$
\begin{align}
 P^x[Y_t^{i}\in T^{-1}(A)]=P^y[Y_t^{i}\in T^{-1}(A)] \qquad \text{whenever
 $T(x)=T(y)$}.
\label{markovcondition}
\end{align}
Now the choice of $\hat q$ and the metric $a_i$ imply for any
Borel set $A\subseteq \overline{\Sigma}_N$ condition \eqref{markovcondition} is
satisfied. To see this,  let  $\{\sigma_k\, |\, k = 1, \cdots, N +
N(N-1)/2\}$ be the collection of line-reflections in $\R^N$ with
respect to either one of the coordinate axes $\{ \lambda e_i,
\lambda \in \R\}$ or a diagonal $\{\lambda (e_j+e_k)\}$, then for
$x,y \in \R^N$ with $T(x)=T(y)$ there exists a finite sequence
$\sigma_{k_1},\cdots \sigma_{k_l}$ such that $\tau(x):=\sigma_{k_1}\circ
\sigma_{k_2} \cdots \circ \sigma_{k_l} (x) = y$. Now each of the
reflections $\sigma_i$ preserves the Dirichlet form
\eqref{globaldirichletform} when $a$ is chosen as in
\eqref{matrixchoice}, such that the processes $\tau(Y^{i,x}_\cdot )$
and  $Y^{i,y}_\cdot$ are equal in distribution. Moreover, $\tau(T^{-1}(A)) = T^{-1}(A)$, from which \eqref{markovcondition} is
obtained.\end{proof}

\begin{lemma} \label{localmartp}
\textit{For each $f\in C^2_{Neu}$ the process $t \to f(\tilde
X^i_t) - \int_0^t L^Nf(\tilde X^i_s)\, ds$ is a martingale w.r.t.\ to the
filtration generated by $\tilde X^i_\cdot $.}

\begin{proof} Similar to Proposition  \ref{ibpf}  one
checks for $f\in C^2_{Neu} \cap C_c(\Omega_i)$ that the function $f_i$ on $\R^N$, which is defined by $f_i =
f\circ H_i \circ T=f\circ \rho_i$ on the set $ \{ x\in\R^N: \|x\|_\infty <1-\delta\}$ and $f_i=0$ on the complement this set, belongs to the domain of the
generator $L^i$ of the Dirichlet form \eqref{globaldirichletform}
with $a=a_i$ as in \eqref{matrixchoice} (cf.\ Remark~\ref{ibpf_Y}). Hence the process $f_i
(Y^i_\cdot  ) - \int_0^\cdot L^i f_i (Y^i_s) ds $ is a martingale
w.r.t. to the filtration generated by $Y^{i}$ and thus also $f_i
(Y^{i,E}_\cdot  ) - \int_0^\cdot L^i f_i (Y^{i,E}_s) ds $ due to the
optional sampling theorem.
Obviously in the last statement the
function $f$ can be modified outside of $\Omega_i^{2\delta}$, i.e. it holds
also for $f\in C^2_{Neu}$.
Moreover, $f_i(Y^{i,E}_\cdot) =
f(\tilde X^i _\cdot)$ and $L^i f_i = (L^N f) \circ \rho_i$ on $\bar E$.
Thus, $f(\tilde X^i_\cdot ) - \int_0^\cdot L^N f(\tilde X^i_s)
ds $ is a martingale w.r.t.\ the filtration generated by $Y^i _\cdot
$ and adapted to the filtration generated by $\tilde X^{i}_\cdot=
H_i \circ T (Y^{i,E}_\cdot)$ which establishes the claim.
\end{proof}

\end{lemma}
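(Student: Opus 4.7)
The strategy is to lift $f$ via the folding-and-reflection map $\rho_i = H_i\circ T$ to a function $f_i$ on $\R^N$, verify that $f_i\in\Dom(L^i)$ where $L^i$ is the generator of $\hat\E^{N,a_i}$, and then push the martingale property of the stopped process $Y^{i,E}$ forward under $\rho_i$. Since $\rho_i(\bar E)\subseteq\Omega_i^{2\delta}$, the process $\tilde X^i$ never leaves $\Omega_i^{2\delta}$; hence by multiplying $f$ by a smooth cutoff supported in $\Omega_i$ and equal to $1$ on $\Omega_i^{2\delta}$, I may assume without loss of generality that $f\in C^2_{Neu}\cap C_c(\Omega_i)$. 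For such $f$, set $f_i=f\circ\rho_i$ on the box $\{\Vert x\Vert_\infty<1-\delta\}$ and $f_i\equiv 0$ elsewhere.

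The crux is to verify that $f_i\in\Dom(L^i)$. By the integration-by-parts formula of Remark~\ref{ibpf_Y}, this reduces to two conditions: (a) $f_i$ is $C^1$ in the interior of each chamber $S_n$, which is immediate because $\rho_i$ is affine on each $S_n$ and $f$ is $C^2$; and (b) the matching condition $\langle a_i^{t} a_i\,\nabla f_i,\nu_n\rangle=0$ holds on every $\partial S_n$. Condition (b) is the analytic heart of the argument: the choice $a_i=(D\rho_i^{-1})^{t}$ in \eqref{matrixchoice} is tailored precisely so that the vector $a_i^t a_i\,\nabla f_i$, when computed from the two sides of a reflection hyperplane, has matching normal components, and the Neumann condition $\nabla f\cdot\nu=0$ on $\partial\Sigma_N$ lifts under $\rho_i$ to exactly this matching at the hyperplanes bounding the chambers. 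I would verify (b) by a direct chain-rule calculation at a generic face of some $\partial S_n$, tracking how $T$ reflects coordinates across that face. A second chain-rule computation then produces $L^i f_i=(L^N f)\circ\rho_i$ on $\bar E\setminus\bigcup_n\partial S_n$, since the conjugation by $\rho_i$ turns the operator in Remark~\ref{ibpf_Y} into exactly the form of $L^N$.

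With $f_i\in\Dom(L^i)$ in hand, the general theory of symmetric Dirichlet forms gives that $f_i(Y^i_t)-\int_0^t L^i f_i(Y^i_s)\,ds$ is a martingale for the natural filtration of $Y^i$, and optional sampling at the bounded stopping times $t\wedge\tau_E$ preserves this property for the stopped process $Y^{i,E}$. Substituting $f_i(Y^{i,E}_t)=f(\tilde X^i_t)$ and $L^i f_i(Y^{i,E}_s)=L^N f(\tilde X^i_s)$ yields the desired martingale with respect to the filtration of $Y^i$. Since $\tilde X^i=\rho_i(Y^{i,E})$ is measurable with respect to that filtration, the resulting process is adapted to the (smaller) filtration generated by $\tilde X^i$, and the tower property transfers the martingale property to this smaller filtration. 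The main obstacle is establishing the matching condition (b), which requires unpacking how the specific construction of $a_i$ in \eqref{matrixchoice} interacts with the reflection symmetry encoded in $T$ and with the Neumann condition on $\partial\Sigma_N$; all remaining steps are routine applications of the chain rule, optional sampling, and the martingale theory of Hunt processes.
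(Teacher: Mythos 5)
Your proposal follows the same route as the paper's proof: reduce to $f\in C^2_{Neu}\cap C_c(\Omega_i)$ (the paper notes the modification outside $\Omega_i^{2\delta}$ is harmless since the stopped process never leaves $\bar E$), lift to $f_i=f\circ\rho_i$, verify $f_i\in\Dom(L^i)$ via the integration-by-parts formula of Remark~\ref{ibpf_Y}, apply optional sampling at $t\wedge\tau_E$, use $f_i(Y^{i,E}_\cdot)=f(\tilde X^i_\cdot)$ and $L^if_i=(L^Nf)\circ\rho_i$, and then pass to the coarser filtration generated by $\tilde X^i$ via adaptedness (tower property). Your identification of the chamber-wise regularity of $f_i$ and the normal-matching condition $\langle a_i^{t}a_i\nabla f_i,\nu_n\rangle=0$ as the substantive verification mirrors exactly what the paper delegates to the integration-by-parts remark, so the two proofs coincide in both structure and content.
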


\begin{proposition}
\textit{ For every $i$, $\tilde X^{i}$ is a Feller process, more precisely
\begin{enumerate}
\item[i)] for every $t>0$ and every $f\in C(\Omega_i^{2\delta})$ we have $\tilde P^i_t f \in C(\Omega_i^{2\delta})$,
\item[ii)] for every $f\in C(\Omega_i^{2\delta})$, $\lim_{t\downarrow 0} \tilde P^i_t f=f$ pointwise in $\Omega_i^{2\delta}$.
\end{enumerate}
}
\end{proposition}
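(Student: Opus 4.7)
The plan is to transport the Feller property of the stopped process $Y^{i,E}$ on $\bar E$, which has just been established in Proposition~\ref{feller_killed}, onto its projection $\tilde X^i = \rho_i(Y^{i,E})$ via the continuous surjection $\rho_i = H_i \circ T : \bar E \to \Omega_i^{2\delta}$. The central idea is to produce a canonical continuous right inverse of $\rho_i$, so that the semigroup $\tilde P^i_t$ can be represented as $Q^E_t$ composed with continuous maps, and then to invoke Proposition~\ref{feller_killed} directly.

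The first step is the elementary observation that $T$ acts as the identity on the fundamental chamber $S_1$, hence on $\Omega_N^{2\delta} \subseteq S_1$. Since $H_i$ is an affine bijection with $H_i \circ H_i = \id$ sending $\Omega_N^{2\delta}$ onto $\Omega_i^{2\delta}$, the map $s := H_i : \Omega_i^{2\delta} \to \Omega_N^{2\delta} \subseteq \bar E$ is continuous and satisfies
\[
\rho_i \circ s \;=\; H_i \circ T \circ H_i \;=\; H_i \circ H_i \;=\; \id_{\Omega_i^{2\delta}},
\]
so it is a continuous section of $\rho_i$. One should also check that $\Omega_N^{2\delta} \subseteq \bar E$, which is ensured by the choice of $\delta$.

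With this section in hand, the Markov property of $\tilde X^i$ from the preceding lemma guarantees that, for $f \in C(\Omega_i^{2\delta})$ and $x \in \Omega_i^{2\delta}$,
\[
\tilde P^i_t f(x) \;=\; E^{s(x)}\bigl[f(\rho_i(Y^{i,E}_t))\bigr] \;=\; Q^E_t(f \circ \rho_i)(H_i(x)),
\]
independently of the particular preimage of $x$ chosen under $\rho_i$. Since $\rho_i$ is continuous on $\bar E$, we have $f \circ \rho_i \in C(\bar E)$; then property (i) of Proposition~\ref{feller_killed} gives $Q^E_t(f \circ \rho_i) \in C(\bar E)$, and composing with the continuous map $H_i$ yields $\tilde P^i_t f \in C(\Omega_i^{2\delta})$, which is (i). For (ii), property (ii) of Proposition~\ref{feller_killed} together with the identity $\rho_i(H_i(x)) = x$ give $\lim_{t \downarrow 0} \tilde P^i_t f(x) = \lim_{t \downarrow 0} Q^E_t(f \circ \rho_i)(H_i(x)) = f(x)$ pointwise.

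All the analytic content is already packaged inside Proposition~\ref{feller_killed}; the only nontrivial conceptual step here is isolating the correct continuous section $s = H_i$, which relies crucially on the facts that $T$ restricts to the identity on the chamber $S_1$ and that $H_i$ is an involution exchanging $\Omega_N^{2\delta}$ and $\Omega_i^{2\delta}$. Once this section is in place, the Feller property of $\tilde X^i$ is simply the Feller property of $Y^{i,E}$ pushed through the continuous maps $\rho_i$ and $H_i$.
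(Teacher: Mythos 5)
Your proof is correct and follows essentially the same route as the paper's: the paper simply notes $\tilde P^i_t f(x) = Q^E_t(f\circ H_i\circ T)(x)$ and concludes from Proposition~\ref{feller_killed} together with continuity of $H_i$ and $T$. Your version is slightly more careful in making explicit the continuous section $s = H_i$ (using $T|_{S_1}=\id$ and $H_i\circ H_i = \id$) and in flagging that the expression is independent of the chosen preimage, but the substance is the same.
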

\begin{proof}
 Since obviously for every continuous $f$ on $\Omega_i^{2\delta}$
\[
 \tilde P^i_tf(x)=E^x[f(\tilde X^i_t)]=E^x[f(H_i\circ T(Y^{i,E}_t))]=Q^E_t(f \circ H_i\circ T)(x), \qquad t>0,
\]
the result follows from Proposition~\ref{feller_killed} and the
continuity of the mappings $H_i$ and $T$.
\end{proof}

Next we define the process $\tilde X$ with state space $\overline{\Sigma}_N$ as follows: Let $q_N$ be the initial distribution of $\tilde X$, i.e.\  $\tilde X_0 \sim q_N$. Choose $i_1\in \{0,\ldots N\}$ such that $\tilde X_0 \in \Omega_{i_1}^{2\delta}$ and $\dist(\tilde X_0,A^{i_1})=\max_i \dist(\tilde X_0,A^{i})$. We set $\tilde X_t=\tilde X_t^{i_1}$ for $0\leq t \leq T_1$, where $T_1$ denotes the first hitting time of $A^{i_1}$, i.e.\ on $[0,T_1]$ the process behaves according to $\tilde P^{i_1}$. Choose now $i_2\in \{0,\ldots N\}$ such that $\tilde X_{T_1} \in \Omega_{i_2}^{2\delta}$ and $\dist(\tilde X_{T_1},A^{i_2})=\max_i \dist(\tilde X_{T_1},A^{i})$. At time $T_1$ the process starts afresh from $\tilde X_{T_1}$ according to $(\tilde P^{i_2}_t)$ up to the first time $T_2$ after $T_1$, when it hits $A^{i_2}$. This procedure is repeated forever.

\begin{proposition}
 \textit{$\tilde X$ is a Feller process.}
\end{proposition}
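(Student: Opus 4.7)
The plan is to verify the two Feller conditions by combining the Feller property of each local piece $\tilde X^i$ established in the preceding proposition with the strong Markov property applied at the renewal times $T_k$ of the construction.

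For the pointwise continuity at $t=0$, observe that the selection rule for $i_1(x)$ forces $x$ to lie strictly in the interior of $\Omega_{i_1(x)}^{2\delta}$, since a maximizer of $\dist(x, A^i)$ among those $i$ with $x \in \Omega_i^{2\delta}$ automatically satisfies $\dist(x, A^{i_1(x)}) > 0$. Consequently $T_1 > 0$ $P^x$-almost surely, and on $[0, T_1)$ the trajectory of $\tilde X$ coincides with that of $\tilde X^{i_1(x)}$, whose paths are continuous. Dominated convergence then yields $\tilde P_t f(x) \to f(x)$ as $t \downarrow 0$ for every $f \in C(\overline{\Sigma}_N)$.

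For continuity in $x$ at fixed $t > 0$, the strong Markov property at $T_1$ gives the decomposition
\[
\tilde P_t f(x) = E^x\bigl[f(\tilde X^{i_1(x)}_t)\,\indicator_{\{t < T_1\}}\bigr] + E^x\bigl[(\tilde P_{t - T_1} f)(\tilde X^{i_1(x)}_{T_1})\,\indicator_{\{t \geq T_1\}}\bigr].
\]
When $i_1$ is constant on a neighborhood of a reference point $x_0$, the first term is continuous in $x$ by the Feller property of $\tilde X^{i_1}$ (via Proposition~\ref{feller_killed} applied to $f$ extended by zero outside $\Omega_{i_1}^{2\delta}$), and the second term is handled by induction on the number of renewals occurring in $[0,t]$. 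To justify the induction one needs a uniform-in-$x$ tail estimate on the renewal count: since each piece requires the process to traverse a Euclidean distance of at least $\delta$ inside a region where $\tilde X^{i_k}$ has locally bounded quadratic variation, one obtains an exponential moment bound on $T_{k+1} - T_k$, hence geometric decay of $P^x[T_k \leq t]$.

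The main obstacle is the potentially discontinuous dependence of $i_1(x)$ on $x$ via the argmax, which can jump across loci where several indices tie. To remove this, I would exploit that on the overlap $\Omega_i^{2\delta} \cap \Omega_j^{2\delta}$ both $\tilde X^i$ and $\tilde X^j$ solve the martingale problem for $(L^N, C^2_{Neu})$ by Lemma~\ref{localmartp}; a localized form of the Markov uniqueness of Proposition~\ref{uniqueness} then forces them to induce the same local transition law as long as they stay away from the singular strata. Hence the choice at a tied maximizer is immaterial in distribution, and near any fixed $x_0$ the argmax may be replaced by a measurable selection that is locally constant, reducing condition~(i) to the continuity of a single local Feller semigroup, which is already available.
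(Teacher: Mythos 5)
Your proposal follows the same overall strategy as the paper: decompose $\tilde P_t f$ along the renewal times $T_k$ at which the chart index is re-selected, use the Feller property of each local piece $\tilde X^i$ (Proposition~3.17) on each segment, propagate continuity through the strong Markov property, and control the remainder by a tail estimate on the renewal count. The paper's own proof is terser: it proves $\tilde P_{T_n}f$ continuous by induction using $\tilde P_{T_{n+1}}f(x)=\tilde P_{T_n}(\tilde P^{i_n}_{T_{n+1}-T_n}f)(x)$, adds the pieces $E^x[f(\tilde X_t)\indicator_{\{t\in(T_k,T_{k+1}]\}}]$, and bounds the tail via $T_n\nearrow\infty$ locally uniformly in $x$ (asserted, not derived). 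You additionally handle the pointwise $t\downarrow 0$ condition, which the paper omits, and that argument is correct: the argmax selection guarantees $\dist(x,A^{i_1(x)})>0$, since the largest gap $x^{i+1}-x^i$ exceeds $2\delta$ by the constraint $\delta<[2(N+2)]^{-1}$, hence $T_1>0$ a.s.\ and dominated convergence applies.

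Two points deserve a caveat. First, the discontinuity of $x\mapsto i_1(x)$ is a real gap that the paper's proof passes over silently; your patch via Markov uniqueness (Proposition~\ref{uniqueness}) together with Lemma~\ref{localmartp} is exactly the identification mechanism the paper itself later uses to prove $\tilde X\overset{d}{=}X^N$, so it applies equally well to compare two versions of $\tilde X$ built with different tie-breaking rules, and there is no circularity since Markov uniqueness is established independently of the Feller property. But you should be careful that replacing the argmax by a locally constant selection at the \emph{first} step only, while keeping the argmax rule thereafter, leaves the construction well-posed; an arbitrary locally constant selection on every step could allow $T_{k+1}-T_k\downarrow 0$. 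Second, the phrase ``locally bounded quadratic variation'' is a slight misnomer: $\tilde X^{i_k}$ need not be a semimartingale (indeed for $\beta<N+1$ it is not, by Theorem~\ref{thmsemimart}), but its martingale part in the Fukushima decomposition does have quadratic variation controlled by the uniformly elliptic bound \eqref{ass_a} on $a_i^t a_i$ composed with the Lipschitz map $\rho_i$, and the exit-time estimate you want follows from the heat kernel bounds / H\"older path regularity supplied by \cite{MR1387522}. With those two clarifications your argument is sound and is in fact somewhat more careful than the paper's own.
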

\begin{proof}
 Let $(\tilde P_t)$ denote the semigroup associated to $\tilde X$. For $f\in C(\overline{\Sigma}_N)$, we need to show that $\tilde P_t f\in C(\overline{\Sigma}_N)$ for every $t>0$.
Let us first show that $\tilde P_{T_n} f\in C(\overline{\Sigma}_N)$ for every $n$ using an induction argument. For an arbitrary $x\in \overline{\Sigma}_N$, choose $i_1$ as above depending on $x$ such that $\tilde P_{T_1} f(x)=\tilde P^{i_1}_{T_1} f(x)$. Since $\tilde P^{i_1}_{T_1} f \in C(\Omega_{i_1}^{2\delta})$, we conclude that $\tilde P_{T_1} f$ is continuous in $x$ for every $x$.
For arbitrary $n$ and $x\in \overline{\Sigma}_N$ we have by the strong Markov property
\begin{align*}
 \tilde P_{T_{n+1}} f(x)&=E^x[f(\tilde X_{T_{n+1}})]=E^x\left[E_{\tilde X_{T_N}}[f(\tilde X^{i_n}_{T_{n+1}-T_n})] \right]=E^x\left[ \tilde P^{i_n}_{T_{n+1}-T_n}f(\tilde X_{T_N}) \right] \\
&=\tilde P_{T_{n}} (P^{i_n}_{T_{n+1}-T_n} f)(x)
\end{align*}
and since $P^{i_n}_{T_{n+1}-T_n} f$ can be extended to a continuous function on $\overline{\Sigma}_N$, we get  $\tilde P_{T_{n+1}} f\in C(\overline{\Sigma}_N)$ by the induction assumption.

Similarly, one can show that for every $n$ the mapping $x\mapsto E^x[f(\tilde X_t) \indicator_{\{ t\in (T_n,T_{n+1}] \}}]$ is continuous. Finally, for every $x\in \overline{\Sigma}_N$
\begin{align*}
 \left| \tilde P_t f(x) - \sum_{k=0}^{n-1}  E^x\left[f(\tilde X_t) \indicator_{\{ t\in (T_k,T_{k+1}] \}}\right]\right|=\left| E^x\left[f(\tilde X_t) \indicator_{\{ t>T_n \}}\right] \right|\leq \|f\|_\infty \, P^x[t>T_n]
\end{align*}
and since $T_n \nearrow \infty$ $P^x$-a.s. locally uniformly in $x$
as $n$ tends to infinity, the claim follows. \end{proof}

The proof of Theorem~\ref{fellerthm} is complete, once we have shown that the processes $\tilde X$ and the original process $X^N$ have the same law.

\begin{lemma} \textit{For $x\in\overline{\Sigma}_N$ the process $(\tilde X^x _\cdot )$ obtained from conditioning
 $\tilde X$ to start in $x$, solves the martingale problem for the operator
 $(L^N,C^2_{Neu})$ as in \eqref{generator} and starting distribution
 $\delta_x$.}
\end{lemma}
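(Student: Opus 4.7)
The plan is to lift the martingale property on each constituent piece $\tilde X^{i_k}$, which is exactly the content of Lemma~\ref{localmartp}, to the concatenated process $\tilde X$ by invoking the strong Markov property guaranteed by the Feller property of $\tilde X$ established in the preceding proposition. Fix $x \in \overline{\Sigma}_N$, $f\in C^2_{Neu}$, and set
\[
  M_t := f(\tilde X^x_t) - \int_0^t L^N f(\tilde X^x_s)\,ds,
\]
and let $(\mathcal G_t)$ denote the natural filtration of $\tilde X^x$. Let $0 = T_0 < T_1 < T_2 < \cdots$ be the stopping times at which the construction switches between the processes $\tilde X^{i_k}$.

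First I would establish the martingale identity on each single piece. By the very construction of $\tilde X$, conditionally on $\mathcal G_{T_k}$ the process $(\tilde X^x_{T_k + s})_{s\geq 0}$ up to time $T_{k+1}$ agrees in law with $\tilde X^{i_k}$ started from $\tilde X^x_{T_k}$. Lemma~\ref{localmartp} applied to this piece yields that for any $s \leq t$,
\[
  E^x\Bigl[\,M_{t\wedge T_{k+1}} - M_{s\vee T_k}\;\Big|\; \mathcal G_{s\vee T_k}\Bigr]\,\indicator_{\{s\vee T_k \leq t\wedge T_{k+1}\}} = 0,
\]
where the transfer of the filtration from the one generated by $\tilde X^{i_k}$ to the full filtration $\mathcal G$ uses the strong Markov property of the Feller process $\tilde X$ at $T_k$. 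Summing these identities over $k = 0, \ldots, n-1$ telescopes to the statement that the stopped process $(M_{t\wedge T_n})_{t\geq 0}$ is a $(\mathcal G_t)$-martingale for every $n$.

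The second step is to pass to the limit $n\to\infty$. From the previous proposition we have $T_n \nearrow \infty$ $P^x$-a.s.\ (locally uniformly in $x$), so $M_{t\wedge T_n}\to M_t$ $P^x$-a.s. Since $\overline{\Sigma}_N$ is compact, $f$ is bounded; combined with the $L^1$-control on $\int_0^t L^N f(\tilde X^{i_k}_s)\,ds$ supplied by Lemma~\ref{localmartp} on each piece, the family $(M_{t\wedge T_n})_n$ is uniformly integrable. Dominated convergence then promotes $M$ itself to a genuine $(\mathcal G_t)$-martingale, which is precisely the martingale problem for $(L^N, C^2_{Neu})$.

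The main obstacle is the first step: carefully matching the filtration generated by the single piece $\tilde X^{i_k}$ (which is the one appearing in Lemma~\ref{localmartp}) with the joint filtration $\mathcal G$ at the pasting instants $T_k$. This is exactly what the strong Markov property of $\tilde X$ is needed for, and it is the reason the Feller property of $\tilde X$ was established before attempting this identification. A secondary technical point is ensuring that the series of stopping times $T_n$ does not accumulate; this was already addressed in the proof of the preceding proposition and can be re-used here.
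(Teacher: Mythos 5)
Your proposal takes essentially the same route as the paper's proof: telescope the increment $f(\tilde X_t)-f(\tilde X_s)-\int_s^t L^N f(\tilde X_\sigma)\,d\sigma$ over the switching times $T_k$, use the strong Markov property at each $T_k$ to reduce to the single-piece martingale identity from Lemma~\ref{localmartp}, and then observe that each term vanishes. You are a bit more explicit about truncating at $T_n$ and passing to the limit via uniform integrability (which is automatic here since $f\in C^2_{Neu}$ forces $L^N f$ to be bounded, the Neumann condition cancelling the singular drift terms), but this is a minor refinement of the same argument rather than a different approach.
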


\begin{proof}
Let $\{T_k\}$ be the sequence of
strictly increasing stopping times introduced in the construction of
the process $\tilde X$. Then for $s< t$
\[ f(\tilde X_t)-f(\tilde X_s) -\int_s^t L^Nf(\tilde X_\sigma)d\sigma = \sum_k
f(\tilde X_{(T_{k+1}\vee s)\wedge t}) - f(\tilde X_{(T_{k}\vee
s)\wedge t}) -\int\limits_{(T_{k}\vee s)\wedge t}^{(T_{k+1}\vee
s)\wedge t} L^Nf(\tilde X_\sigma)d\sigma.
\]
Hence
\begin{align*}
\mathbb E \bigl( f(\tilde X_t ) &- \int_0^t L^Nf(\tilde X_\sigma)
d\sigma \, \bigl|\mathcal F_s\bigr)   = f(\tilde X_s ) - \int_0^s
L^Nf(\tilde X_\sigma)
d\sigma  \\
& + \sum_k \mathbb E \bigl( f(\tilde X_{(T_{k+1}\vee s)\wedge t}) -
f(\tilde X_{(T_{k}\vee s)\wedge t}) -\int\limits_{(T_{k}\vee
s)\wedge t}^{(T_{k+1}\vee s)\wedge t} L^Nf(\tilde
X_\sigma)d\sigma\,\bigl| \mathcal F_s \bigr).
\end{align*}
Using  the strong Markov property of the Feller process $\tilde
X_\cdot$ and its pathwise decomposition into pieces of $\{\tilde
X^{i}\}$-trajectories one obtains
\begin{align}
\mathbb E \bigl(  & f(\tilde X_{(T_{k+1}\vee s)\wedge t}) - f(\tilde
X_{(T_{k}\vee s)\wedge t})   -\int\limits_{(T_{k}\vee s)\wedge
t}^{(T_{k+1}\vee s)\wedge t} L^Nf(\tilde X_\sigma)d\sigma\,\bigl|
\mathcal F_s \bigr) \nonumber \\
 & = \mathbb E \bigl[ \mathbb E_{\tilde X_{(T_k\vee s)\wedge t}} \bigl( f(\tilde
 X^{i_k}_{\tau\wedge (t-s)}) -f(\tilde
 X^{i_k}_0)
 - \int_0^{\tau\wedge (t-s)} L^Nf(\tilde X^{i_k}_\sigma) d\sigma\bigr )\, \bigl|
 \mathcal F_s \bigr) \bigr] \label{expectzero},
 \end{align}
where, by construction of $\tilde X_\cdot$,  $\tau$ ist the hitting
time of the the set $A_{i_k}$ for which  $\dist(A_{i_k},\tilde
X_{T_k}) = \max_i \dist(A_i,\tilde X_{T_k})$. Lemma \ref{localmartp}
implies that the inner expectation in \eqref{expectzero} is zero.
\end{proof}

The last ingredient for our proof of Theorem \ref{fellerthm} is the
identification of the processes $\tilde X_\cdot$ with $X_\cdot$.
Since both are Markovian and solve the martingale problem for the
operator $(L^N,C^2_{Neu})$ it suffices to show that the martingale
problem admits at most one Markovian solution. Clearly,  any such
solution induces a symmetric sub-Markovian semigroup on
$L^2(\Sigma_N,q_N)$ whose generator extends $(L^N,C^2_{Neu})$. Hence
it is enough
 to establish the following so-called Markov uniqueness property of
$(L^N,C^2_{Neu})$, cf.\ \cite[Definition 1.2]{MR1734956}, stated in Proposition~\ref{uniqueness}.


\begin{proof}[Proof of Proposition~\ref{uniqueness}]
Let $H^{1,2}(\Sigma_N, q_N)$  (resp.\ '$H^{1,2}_0(\Sigma_N,q_N)$' in
the notation of \cite{MR1734956}) denote the closure of
$C^2_{Neu}$ w.r.t. to the norm $\norm{f}_1 =  ({
\norm{f}^2_{L^2(\Sigma_N, q_N)} + \norm{\nabla f}^2_{L^2(\Sigma_N,
q_N)}})^{1/2}$ and let $W^{1,2}$  be the Hilbert space of
$L^2(\Sigma_N, q_N)$-functions $f$ whose distributional derivative
$Df$ is in $L^2(\Sigma_N,q_N)$, equipped with the norm $\norm{f}_1 =
( { \norm{f}^2_{L^2(\Sigma_N, q_N)} + \norm{D f}^2_{L^2(\Sigma_N,
q_N)}})^{1/2}$. Clearly, the quadratic form $Q(f,f) = \langle Df,
Df\rangle_{L^2(\Sigma_N, q_N)}$, $\mathcal D(Q) = W^{1,2}(\Sigma_N,
q_N)$,  is a Dirichlet form on $L^2(\Sigma_N,q_N)$. Hence we may use
the  basic criterion for Markov uniqueness \cite[Corollary
3.2]{MR1734956}, according to which
 $(L, C^2_{Neu})$ is
Markov unique if $H^{1,2}=W^{1,2}$. \smallskip

To prove the latter it obviously suffices to prove  that $H^{1,2}$
is dense in $W^{1,2}$. Again we proceed by localization as follows.
For fixed $\delta>0$ let $\Omega_i^\delta  = \Omega_i$ denote the
subsets defined in \eqref{omegaidef}, then $\{\Omega^{3\delta}_i\}$
constitutes a relatively open covering of $\overline{\Sigma}_N$ for $\delta $
small enough. Let $\{\eta_i\}_{i=0,\cdots, N}$ and
$\{\chi_i\}_{i=0,\cdots, N}$ be two smooth partitions of unity on
$\overline{\Sigma}_N$ such that $\eta_i= 1 $ on $ \Omega_i^{3\delta}$ and
$\supp(\eta_i) \subset \Omega_i^{2\delta}$ and $\chi_i= 1 $ on $
\Omega_i^{2\delta}$ and $\supp(\chi_i) \subset \Omega_i^\delta$
respectively. Hence writing  $f \in W^{1,2}(\Sigma_N, q_N)$ as $f =
\sum_i f_i$ with $f_i = \eta_i \cdot f \in W^{1,2}(\Sigma_N, q_N)$
it suffices to prove that $f_i \in H^{1,2}(\Sigma_N, q_N)$. We show
first that $f_i$ can be approximated w.r.t.\ $\norm{.}_1$ by
functions which are smooth up to boundary and secondly that such
functions can be approximated in $\norm{.}_1$ by smooth Neumann
functions. We give details for the case $i=N$ only, the other cases
can be treated almost the same way by using the maps $H_i$.

\smallskip
Let  $g_N = f\cdot \chi_N \in W^{1,2}(\Sigma_N, q_N)$, then the
restriction of $g_N $ to $\Omega_N^\delta$ belongs to the space
$W^{1,2}(\Omega_N^{\delta},q_N)= W^{1,2}(\Omega_N^{\delta},\hat q)$,
where $\hat q$ denotes the modification of $q_N$ according to
\eqref{def_hatqn}. Due  to Lemma \ref{muckenhoupt} the extension
$\hat q(x) = \hat q(T(x))$, $x\in   \R^N$, lies in the Muckenhoupt
class $\mathcal A_2$. Further note that
$W^{1,2}(\Omega_N^{\delta},\hat q) \subset
W^{1,2}(\Omega_N^{\delta},dx)=H^{1,2}(\Omega_N^{\delta},dx)$ if $ \beta \leq N+1$, and $W^{1,2}(\Omega_N^{\delta},\hat q) \subset
W^{1,1}(\Omega_N^{\delta},dx)=H^{1,1}(\Omega_N^{\delta},dx)$ by the H\"older inequality if $ N+1 < \beta <2 (N+1)$,   such
that $g_N$ has well defined boundary values in $L^1(\partial
\Omega_N, dx)$. Hence we may conclude that the extension $\hat
g_N(x) = g_N(T(X))$, $x\in \R^N$ defines a weakly differentiable
function on $\R^N$ with $\norm{\hat g_N}_{W^{1,2}(\R^N, \hat q)} =
2^N \cdot N! \, \norm{ g_N}_{W^{1,2}(\Omega_N, \hat q)}$. By \cite[Theorem
2.5]{MR1246890} the mollification with the standard mollifier yields
an approximating sequence $\{u_l\}_l$ of smooth functions $u_l\in
C^\infty_0(\R^N)$ of $g_N$ in the weighted Sobolev spaces $H^{1,2}(
\R^N ,\hat q)$. Extending each $u_l\cdot \eta_N$ by zero in
$\overline{\Sigma}_N\setminus \Omega^{\delta}_N$ we obtain a sequence of
$C^\infty(\overline{\Sigma}_N)$-functions $\{u_l\cdot \eta_N\}_l$ which
converges to
  $g_N \cdot \eta_N = f_N$ in $H^{1,2}(\Sigma_N,q_N)$.
  This finishes the
  first step. In the second step we thus may assume w.l.o.g.\
that  $g_N $ is smooth up to the boundary of $\Sigma_N$. In
particular, $g_N$ is globally Lipschitz. Since $q_N$ is integrable
on $\Sigma_N$ we may modify $f_N$ close to the boundary to obtain a
Lipschitz function $\tilde f_N$ which satisfies the Neumann
condition   and which is close to $f_N$ in $\norm{.}_1$. (Take,
e.g.\ $\tilde f_N(x) = f(\pi(x))$, where $\pi(x)$ is the projection
of $x$ into the set $\Sigma_N^{\epsilon}= \{ x\in \Sigma_N, \, |
\dist(x,\partial \Sigma_N)\geq \epsilon\}$ for small
 $\epsilon>0 $.)  We may now proceed as in step one to obtain an
approximation of $\tilde f_N$ by smooth functions w.r.t.
$\norm{.}_1$, where we
 note that neither extension by reflection through the map $T$ nor
 the standard mollification  in \cite{MR1246890} of the extended $\tilde f_N$ destroys the Neuman
 boundary condition.
\end{proof}

Hence we arrive at the following statement which implies the first
statement of Theorem \ref{fellerthm} for the cases when  $\beta
< 2(N+1)$.

\begin{corollary}
 \textit{For quasi-every $x \in \overline{\Sigma}_N$, the
the processes $\tilde X^x_\cdot$ and $X^x_\cdot$ are equal in law.
In particular, $X$ is a Feller process on $\overline{\Sigma}_N$.}
\end{corollary}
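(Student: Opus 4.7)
The plan is to deduce the identification of $\tilde X$ with $X$ in law from the Markov uniqueness established in Proposition~\ref{uniqueness}, and then to transfer the Feller property. First, I would observe that both $\tilde X^x_\cdot$ and $X^x_\cdot$ solve the martingale problem for $(L^N,C^2_{Neu})$ starting from $\delta_x$: for $\tilde X^x_\cdot$ this is the content of the preceding lemma, while for $X^x_\cdot$ it follows from the Fukushima decomposition applied to $f \in C^2_{Neu} \subset D(\mathcal L)$ together with the identification $\mathcal L f = L^N f$ noted after Proposition~\ref{ibpf}, valid for quasi-every starting point.

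Next I would verify that both processes induce \emph{symmetric} strongly continuous sub-Markovian contraction semigroups on $L^2(\Sigma_N, q_N)$ whose generators extend $(L^N, C^2_{Neu})$. For $X$ this is built in by construction via $\E^N$. For $\tilde X$ the critical point is reversibility of each local building block $\tilde X^i = \rho_i(Y^{i,E})$ with respect to $q_N$ restricted to $\Omega_i^{2\delta}$: the process $Y^i$ is $\hat q$-symmetric by construction, the stopping at $\tau_E$ preserves symmetry, and the matrix $a_i = (D\rho_i^{-1})^{\rm t}$ in \eqref{matrixchoice} was chosen precisely so that the pushforward of $\hat q_{|\bar E}$ under $\rho_i$ is proportional to $q_N$ on $\Omega_i^{2\delta}$. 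Consistency of these local reversible measures, together with the fact that the regime index $i_{k+1}$ is chosen measurably from the current location $\tilde X_{T_k}$, yields reversibility of the pasted process $\tilde X$ with respect to $q_N$.

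Given these inputs, the conclusion is essentially automatic. Since $\beta<2(N+1)$, Proposition~\ref{uniqueness} forces the two symmetric semigroups associated to $\tilde X$ and $X$ to coincide on $L^2(\Sigma_N, q_N)$. Coincidence of semigroups combined with the Markov property yields equality of the finite-dimensional distributions, hence equality in law of $\tilde X^x_\cdot$ and $X^x_\cdot$ for quasi-every $x \in \overline{\Sigma}_N$. The ``in particular'' statement is then immediate: $\tilde X$ is already known to be Feller on $\overline{\Sigma}_N$, and the Feller property transfers to $X$ through this identification (any continuous $C(\overline{\Sigma}_N)$-representative of the semigroup, available by the Feller property of $\tilde X$, serves as the canonical realization of the $L^2$-semigroup of $X$).

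The main obstacle I anticipate lies in rigorously establishing the symmetry of the pasted semigroup $(\tilde P_t)$. The pieces $\tilde X^i$ are reversible on their respective domains by the tailored choice of $a_i$, but global reversibility of $\tilde X$ — built by restarting at the hitting times of the surfaces $A^i$ under a deterministic selection rule for $i_{k+1}$ — is not a priori obvious and must be matched against $q_N$. Everything else in the argument amounts to invoking Proposition~\ref{uniqueness} and reading off the Feller property from the identification of the two processes.
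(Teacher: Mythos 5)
Your approach coincides with the paper's: both prove the identification by observing that $\tilde X$ and $X$ are Markovian solutions of the martingale problem for $(L^N, C^2_{Neu})$, appealing to the Markov uniqueness of Proposition~\ref{uniqueness} to conclude that their $L^2(\Sigma_N,q_N)$-semigroups coincide, and then reading off the Feller property of $X$ from that of $\tilde X$. The one point you treat more carefully than the paper is the $q_N$-symmetry of the semigroup of the pasted process $\tilde X$ (the paper simply asserts it is ``clear''); your sketch via reversibility of each piece $\rho_i(Y^{i,E})$ and preservation of reversibility under the pasting rule is the natural way to substantiate that claim, and you are right to flag it as the genuinely delicate step.
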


\subsection{The case $\beta \geq (N+1)$.} The estimate
\eqref{contraction} looks (also for the case $\beta < N+1$) like a
straightforward application of the Bakry-Emery $\Gamma_2$-calculus.
However,  the complete Bakry-Emery criterion requires  the
$\Gamma_2$-condition on an algebra of functions which is dense in
the domain of the generator $\mathcal L$ of $\E$ w.r.t.\ the graph
norm. The verification of the latter typically leads back to
elliptic regularity theory for $\mathcal L$ which we want to avoid.
Instead our argument below is based on a recent powerful result by
Ambrosio, Savare and Zambotti \cite{asz}
on the stability of reversible processes with log-concave invariant measures.\\

\textit{Proof of  \eqref{contraction}.} By abuse of notation let
$q_N$ be the extension of the measure $q_N$ to $\R^N$ given by
\[q_N ( A)
= q_N(A\cap \Sigma_N)= \frac 1 {Z_\beta}\int_{A\cap\,  \{V<
\infty\}} e^{- (\frac\beta  {N+1} -1) V(x) } dx
\] for any Borel set $A\subset \R^N$, where $V: \R^N \to \R\cup\{\infty\}$,
\[ V(x) = \left\{ \begin{array}{ll} - \sum_{i=1}^{N+1}
\ln(x_i-x_{i-1}) &  x\in \Sigma_N\\
\infty & x \in \R^N \setminus \Sigma_N \end{array} \right.\] is a
lower semicontinuous and convex function on $\R^N$. In fact, by
elementary calculations, for $x \in \Sigma_N$ with $\rho_i=
\rho_i(x) = 1/(x_i-x_{i-1})^2$, $i= 1,\dots, {N+1}$,
\[ \langle \xi , \nabla^2 V(x) \xi\rangle = \rho_1 \xi_1^2 + \sum_{i=1}^{N-1}
\rho_{i+1} (\xi_{i+1}-\xi_i)^2 + \rho_{N+1} \xi_{N}^2\geq k_N
|\xi|^2, \quad \forall \xi \in \R^N,
\]
where $k_N>0$ denotes the smallest eigenvalue of the strictly
elliptic matrix $A=( 2 \delta_{ij} - \delta_{1,|i-j|}) \in
\R^{\N\times\N}$. Hence the measure $q_N$ is log-concave in the
sense of \cite{asz}. (Note that according to ibid.\ Theorem 1.2.b.\
the Feller property of $X_.$ on $\ol \Sigma_N$ is automatically
implied.) Let $\overline x \in \Sigma_N$ denote the barycenter of
the simplex $\Sigma_N$. For $\epsilon
>0$ let
\[ V_\epsilon(x)= \left\{
\begin{array}{ll}
 V((1-\epsilon) x + \epsilon \overline x )) & x\in \overline \Sigma_N \\
 \infty & x\in \R^N \setminus \overline \Sigma_N
\end{array}
\right.\] and define the measure $q_N^\epsilon$ on $\R^N$ by
\[q^\epsilon_N(A) =\frac{1 }{Z_\beta} \int_{A\cap \Sigma_N}
e^{ - (\frac \beta {N+1}-1) V_\epsilon(x)}  dx.\] Since
$V_\epsilon\in C^\infty(\Sigma_N)$ and the boundary of $\Sigma_N$
is piecewise smooth this process can also be constructed by the
corresponding Skorokhod SDE. Moreover, the classical coupling be
reflection method can be applied, c.f. \cite{MR1262968}. Taking
expectations in \cite[eq. (2.5)]{MR1262968} (note that $I=0$ in our
case) and using the strict convexity of $V_\epsilon$ together with
Gronwall's lemma this yields the estimate
\[ \mathbb E_{C}\bigl (\overline X^{\epsilon,1}_t,\overline X^{\epsilon,2}_t\bigr)
\leq e^{-(\frac \beta {N+1} -1) (1-\epsilon)^2 k_N t} d(x,y),\]
where $C $ denotes the law of the coupling process $(\overline
X^{\epsilon,1}_t,\overline X^{\epsilon,2}_t)_{t\geq 0}$, starting in
$(x,y)$. In particular the following estimate  in the
$L^1$-Wasserstein distance $d_{1}$  for the heat kernel of
$(X^\epsilon_.)$ is obtained
\[ d_1(P^\epsilon_t(x,\cdot), P^\epsilon_t(y,\cdot)) \leq e^{-(\frac \beta {N+1} -1) (1-\epsilon)^2 k_N t}
d(x,y).\] Since $q_N^\epsilon \to q_N$ weakly, we may now invoke
Theorem 6.1. of \cite{asz} in order to pass to the limit for
$\epsilon \to 0$ in the left hand side above, using also the
continuity of the $L^1$-  w.r.t\ the $L^2$-Wassserstein metric.
Hence we arrive at
\[ d_1(P_t(x,\cdot), P_t(y,\cdot)) \leq e^{-(\frac \beta {N+1} -1)
(1-\epsilon)^2 k_N t} d(x,y),\] for all $\epsilon >0$ small enough,
thus  also for $\epsilon =0$. Via Kantorovich duality this implies
\[ |P_t f(x) - P_t f(y)| \leq e^{-(\frac \beta {N+1}-1 )  k_N t}
\Lip(f) \, d(x,y),\] for or all $f\in \Lip(\Sigma_N)$  and $x,y \in
\Sigma_N$, which is  the claim. \bbox

\section{Semi-Martingale Properties} In this
final section we prove the semi-martingale properties of $X^N$
stated in Theorem \ref{thmsemimart}. To that aim we establish the
semi-martingale properties for the symmetric process $X^N$ started
in equilibrium, which imply the semi-martingale properties to hold
for quasi-every starting point $x\in \overline{\Sigma}_N$ and by the
Feller properties proven in the last section for every starting
point $x\in\overline{\Sigma}_N$.  In order to establish the
semi-martingale properties of the stationary process, we shall use
the following criterion established by Fukushima in
\cite{MR1741537}.  For every open set $G\subset \overline{\Sigma}_N$
we set
\[
 \mathcal{C}_G:=\{ u\in D(\E^N)\cap C(\overline{\Sigma}_N): \supp(u) \subset G \}.
\]

\begin{theorem}
\textit{
For $u\in D(\E^N)$ the additive functional $u(X^N_t)-u(X^N_0)$ is a semi-martingale if and only if
one of the following (equivalent) conditions holds:
\begin{enumerate}
\item[i)]
For any relatively compact open set $G\subset \overline{\Sigma}_N$, there is a positive constant $C_G$ such that
\begin{align} \label{cond_semimart}
 |\E^N(u,v)| \leq C_G \, \| v\|_\infty, \qquad \forall v\in \mathcal{C}_G.
\end{align}
\item[ii)]
There exists a signed Radon measure $\nu$ on $\overline{\Sigma}_N$ charging no set of zero capacity such that
\begin{align} \label{cond_semimart2}
\E^N(u,v)=-\int_{\overline{\Sigma}_N} v(x) \, \nu(dx), \qquad \forall v\in C(\overline{\Sigma}_N)\cap \D(\E^N).
\end{align}
\end{enumerate}
}
\end{theorem}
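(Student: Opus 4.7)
The plan is to derive the criterion from the Fukushima decomposition for additive functionals (AFs) of the symmetric Hunt process $X^N$. For $u \in D(\E^N)$ one has the canonical decomposition
\[
u(X^N_t) - u(X^N_0) \;=\; M^{[u]}_t + N^{[u]}_t,
\]
where $M^{[u]}$ is a martingale AF of finite energy and $N^{[u]}$ is a continuous AF of zero energy. Since $M^{[u]}$ is always a (local) martingale, the left-hand side is a semi-martingale if and only if $N^{[u]}$ is of bounded variation. So the work reduces to characterising this BV property in terms of $\E^N(u,\cdot)$.

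The easier direction is $\mathrm{(ii)}\Rightarrow \mathrm{(i)}$, which is immediate with $C_G = |\nu|(\overline G)$. For $\mathrm{(i)}\Rightarrow \mathrm{(ii)}$ I would fix a relatively compact open $G$, note that $\mathcal C_G$ is dense in $C_c(G)$ with respect to the sup-norm (a consequence of the regularity of $\E^N$, which follows from the Hamza condition already established at the start of Section~2), and apply the Riesz--Markov theorem to the linear functional $v \mapsto -\E^N(u,v)$, extended from $\mathcal C_G$ to $C_c(G)$ by density; this yields a signed Radon measure $\nu_G$ on $G$. Consistency of the $\{\nu_G\}$ on overlaps is automatic because each is uniquely determined by $\E^N(u,\cdot)$, so the $\nu_G$ glue to a single signed Radon measure $\nu$ on $\overline\Sigma_N$ representing $\E^N(u,\cdot)$. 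The fact that $\nu$ charges no polar set follows because a polar set carries no $\E^N$-capacity and can therefore not contribute to the quadratic-form dual pairing with continuous test functions of compact support.

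To pass between condition $\mathrm{(ii)}$ and the BV property of $N^{[u]}$, I would invoke the Revuz correspondence, which sets up a bijection between positive continuous AFs of the Hunt process and positive smooth measures on $\overline\Sigma_N$ (i.e.\ Radon measures charging no set of zero capacity). Writing $\nu = \nu^+ - \nu^-$ by Jordan decomposition, the associated CAFs $A^{\pm}$ are of bounded variation; the identity
\[
\E^N(u,v) \;=\; -\int_{\overline\Sigma_N} v(x)\, \nu(dx), \qquad v \in C(\overline\Sigma_N)\cap D(\E^N),
\]
combined with the characterisation of $N^{[u]}$ via its Revuz measure, forces $N^{[u]} = -(A^+ - A^-)$, which is of bounded variation. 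Conversely, a semi-martingale decomposition of $u(X^N_\cdot)-u(X^N_0)$ identifies $N^{[u]}$ as its finite-variation part by uniqueness of the Doob--Meyer decomposition, and its Revuz measure furnishes the desired signed $\nu$.

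The main technical obstacle I anticipate is the handling of the signed measure in the Revuz correspondence: one must verify that $\nu$ does not charge polar sets (so that the associated CAFs are well defined without modification on an exceptional set), and one must control the regularity of $A^{\pm}$ near the boundary $\partial \Sigma_N$, where the weight $q_N$ degenerates. The second difficulty is circumvented by the localisation built into condition $\mathrm{(i)}$ via arbitrary relatively compact $G$; the first is handled via the standard argument that continuous test functions with compact support do not see polar sets in the sense of the refined quasi-continuous representatives used throughout Dirichlet form theory.
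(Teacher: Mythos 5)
The paper does not prove this statement at all; it is quoted verbatim as Theorem~6.3 of Fukushima \cite{MR1741537}, and the printed ``proof'' is a one-line citation. Your attempt instead sketches a proof of the cited result, and the route you choose (Fukushima decomposition $u(X^N_t)-u(X^N_0)=M^{[u]}_t+N^{[u]}_t$, reduction to bounded variation of the zero-energy part $N^{[u]}$, Riesz--Markov representation of $-\E^N(u,\cdot)$, Revuz correspondence) is essentially the route Fukushima himself takes. So the strategy is sound and there is no conflict with the paper; the issue is that the sketch glosses over exactly those steps that constitute the content of the theorem.

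Concretely, three places are asserted rather than argued. First, ``semi-martingale iff $N^{[u]}$ is of bounded variation'' is not a direct Doob--Meyer uniqueness statement: $N^{[u]}$ has zero energy in the $L^2(\Pp_m)$ sense, which does not a priori give vanishing pathwise quadratic variation. One must first prove that a zero-energy CAF has zero quadratic variation q.e.; only then does uniqueness of the continuous semi-martingale decomposition identify the finite-variation part with $N^{[u]}$. Second, $N^{[u]}$ is not a positive continuous additive functional and therefore has no Revuz measure, so the phrase ``combined with the characterisation of $N^{[u]}$ via its Revuz measure'' assumes what needs to be shown. The actual work is to verify that the PCAFs $A^{\pm}$ obtained from $\nu^{\pm}$ via Revuz satisfy $N^{[u]}=-(A^{+}-A^{-})$, and that identification rests on an approximation argument built from the identity $\E^N(u,v)=-\int v\,d\nu$, not on a correspondence that $N^{[u]}$ itself enjoys. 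Third, the claim that the Riesz representing measure charges no polar set is genuinely delicate for a \emph{signed} measure: the Cauchy--Schwarz bound $|\E^N(u,v)|\leq\E^N(u,u)^{1/2}\E^N(v,v)^{1/2}$ applied to cutoff functions concentrated near a compact polar set controls $\int v\,d\nu$ but not $\nu^{+}$ and $\nu^{-}$ separately, so one needs a Hahn-decomposition localisation before the capacity estimate can be used. All three points are precisely what Theorem~6.3 of \cite{MR1741537} supplies, so the outline cannot replace the citation without substantially more work.
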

\begin{proof}
 See Theorem 6.3 in \cite{MR1741537}.
\end{proof}

\begin{theorem} \textit{
Let  $X^N$ be a symmetric diffusion on $\overline{\Sigma}_N$ associated with the Dirichlet form $\E^N$, then $X^N$ is  an $\R^{N}$-valued semi-martingale if and only if $\beta/(N+1)\geq 1$.
}
\end{theorem}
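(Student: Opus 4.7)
The plan is to apply the preceding Fukushima criterion coordinate-wise: an $\R^N$-valued process is a semi-martingale iff each of its coordinates is, so it suffices to test condition (i) (equivalently (ii)) on the coordinate functions $u^i(x) = x^i \in C^\infty(\overline{\Sigma}_N) \subset D(\E^N)$, for which
\[
\E^N(u^i, v) = \int_{\Sigma_N} \partial_i v \, q_N(dx).
\]

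\textbf{Sufficiency, $\beta/(N+1) \geq 1$.} I would verify condition (ii) via integration by parts. Since the field $\xi = e_i$ is not tangential along $\partial \Sigma_N$, Proposition~\ref{ibpf} does not apply directly; instead I would run the integration by parts on the regularized sub-simplices $\{x \in \Sigma_N : x^{j+1} - x^j \geq \eta \text{ for all } j\}$ and send $\eta \downarrow 0$. The bulk piece tends to the absolutely continuous signed measure
\[
\bigl(\tfrac{\beta}{N+1} - 1\bigr)\Bigl(\tfrac{1}{x^i - x^{i-1}} - \tfrac{1}{x^{i+1} - x^i}\Bigr) q_N(dx),
\]
which is finite precisely when $\beta/(N+1) \geq 1$: the factor $q_N/(x^i - x^{i-1})$ carries exponent $\beta/(N+1) - 2$ on $(x^i - x^{i-1})$, making it locally integrable iff $\beta/(N+1) > 1$, while the endpoint $\beta/(N+1) = 1$ is trivial because the prefactor $\beta/(N+1) - 1$ vanishes identically. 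The boundary contributions in the limit reduce to surface integrals of $v$ against the trace of $q_N$ on $\partial \Sigma_N$; on each collision face $\{x^{j+1} = x^j\}$ this trace either vanishes (when $\beta/(N+1) > 1$) or equals the uniform surface measure (when $\beta/(N+1) = 1$), and on the outer faces $\{x^1 = 0\}, \{x^N = 1\}$ analogously — in every case a finite Radon measure not charging sets of zero capacity. Thus $\E^N(u^i, v) = -\int v \, d\nu^i$ for a signed Radon measure $\nu^i$, and condition (ii) holds.

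\textbf{Necessity, $\beta/(N+1) < 1$.} To falsify condition (i) for $u^1$ I would concentrate test functions near the collision face $\{x^1 = x^2\}$. Pick a smooth cutoff $\phi_\epsilon : [0, \infty) \to [0, 1]$ with $\phi_\epsilon(0) = 1$, $\supp \phi_\epsilon \subseteq [0, \epsilon]$, and $\phi_\epsilon' \asymp -1/\epsilon$ on $(0, \epsilon)$, and set $v_\epsilon(x) := \phi_\epsilon(x^2 - x^1)$, which lies in $D(\E^N) \cap C(\overline{\Sigma}_N)$ with $\|v_\epsilon\|_\infty = 1$. Substituting $s = x^2 - x^1$ and integrating out the remaining coordinates — whose total contribution is a finite positive constant $C > 0$, since the remaining exponents $\beta/(N+1) - 1 > -1$ keep every leftover gap integral convergent near the other faces — one obtains
\[
\E^N(u^1, v_\epsilon) \;=\; -\int_{\Sigma_N} \phi_\epsilon'(x^2 - x^1) \, q_N(dx) \;\asymp\; -C \int_0^\epsilon \phi_\epsilon'(s)\, s^{\beta/(N+1) - 1}\, ds \;\asymp\; \epsilon^{\beta/(N+1) - 1} \;\longrightarrow\; \infty
\]
as $\epsilon \downarrow 0$, which breaks the bound $|\E^N(u^1, v)| \leq C_G \|v\|_\infty$. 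Therefore $u^1(X^N_\cdot)$, and hence $X^N$ itself, fails to be a semi-martingale.

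The main obstacle is the limiting procedure in the sufficient direction: one must justify that the surface integrals over the shrinking interior boundaries $\{x^{j+1} - x^j = \eta\}$ vanish as $\eta \downarrow 0$, while the surface integrals over the outer faces converge to finite Radon measures — this depends delicately on the precise exponent $\beta/(N+1) - 1$ and on distinguishing collision faces from the outer ones. The necessity direction is essentially explicit once the correct scaling $v_\epsilon = \phi_\epsilon(x^2 - x^1)$ against the singular density is identified.
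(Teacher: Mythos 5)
Your proposal is correct and follows the same overall strategy as the paper (test Fukushima's criterion on the coordinate functions $u^i(x)=x^i$), but the two halves differ in execution in ways worth noting. For the sufficiency direction, the paper avoids your regularization and its boundary-trace limits entirely: it fixes the iterated integral with $x^i$ as the inner variable and integrates by parts in that single variable over $[x^{i-1},x^{i+1}]$; for $\beta'>1$ the weight $(x^i-x^{i-1})^{\beta'-1}(x^{i+1}-x^i)^{\beta'-1}$ vanishes at both endpoints, so there are no boundary terms to track, and the bulk term is bounded by $\|v\|_\infty \int_0^1 r^{\beta'-2}\,dr<\infty$. This sidesteps what you flag as "the main obstacle" — the surface-integral limits over $\{x^{j+1}-x^j=\eta\}$ — so if you adopt that ordering of integration you can drop the $\eta$-regularization altogether. (The case $\beta'=1$ is handled separately: $q_N$ is Lebesgue and the bound is immediate.)

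The necessity direction is where you genuinely diverge. The paper works with condition (ii): it assumes a signed Radon measure $\nu$ exists, computes its Jordan decomposition $\nu=\nu_1-\nu_2$ explicitly from the one-variable integration by parts, and then shows $\nu_1$ is not locally finite near the collision face by exhausting a box $A$ touching that face with compact sets $A_n$ and invoking inner regularity. You instead work directly with condition (i), constructing the concentrated test functions $v_\epsilon(x)=\phi_\epsilon(x^2-x^1)$ with $\|v_\epsilon\|_\infty=1$ and showing $|\E^N(u^1,v_\epsilon)|\asymp\epsilon^{\beta'-1}\to\infty$. Both arguments are sound; yours is more elementary and self-contained (no appeal to inner regularity of Radon measures), while the paper's identifies the smooth measure $\nu$ explicitly, which is additional structural information. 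One small point to tighten in your version: you should note that the "remaining constant" $C$ obtained after integrating out $x^1,x^3,\dots,x^N$ at fixed gap $s=x^2-x^1$ is bounded below uniformly for $s\in[0,\epsilon_0]$ (it is, since the inner Beta-type integrals converge for $\beta'>0$), and that $v_\epsilon\in C^\infty(\overline\Sigma_N)\subset D(\E^N)$ — both easy but worth stating.
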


\begin{proof}
 Since the semi-martingale property for $\mathbb{R}^N$-valued diffusions is defined componentwise, we shall apply Fukushima's criterion for $u(x)=x^i$, $i=1,\ldots,N$.

Let us first consider the case where $\beta':=\beta/(N+1)>1$. Then, for a relatively compact open set $G\subset \overline{\Sigma}_N$ and $v\in \mathcal{C}_G$,
\begin{align*}
 \E^N(u,v)=& \int_{\Sigma_N} \frac \partial {\partial x^i} v(x) \, q_N(dx) \\
=& \frac 1 {Z_\beta} \int_0^1 dx^1 \int_{x^1}^1 dx^2 \cdots  \int_{x^{i-1}}^1 dx^{i+1} \int_{x^{i+1}}^1 dx^{i+2} \cdots \int_{x^{N-1}}^1 dx^N \prod_{\substack{j=0\\ j\not= i-1,\,i}}^N (x^{j+1}-x^j)^{\beta'-1}  \\
& \times \int_{x^{i-1}}^{x^{i+1}} \frac \partial {\partial x^i} v(x) \, (x^{i}-x^{i-1})^{\beta'-1} \, (x^{i+1}-x^i)^{\beta'-1} \, dx^i.
\end{align*}
Since $\beta'>1$, we obtain by integration by parts
\begin{align*}
& \int_{x^{i-1}}^{x^{i+1}} \frac \partial {\partial x^i} v(x) \, (x^{i}-x^{i-1})^{\beta'-1} \, (x^{i+1}-x^i)^{\beta'-1} \, dx^i \\
=&-(\beta'-1)  \int_{x^{i-1}}^{x^{i+1}} v(x) \left[(x^{i}-x^{i-1})^{\beta'-2} \, (x^{i+1}-x^i)^{\beta'-1} - (x^{i}-x^{i-1})^{\beta'-1} \, (x^{i+1}-x^i)^{\beta'-2} \right] \, dx^i
\end{align*}
so that
\begin{align*}
 &\left| \int_{x^{i-1}}^{x^{i+1}} \frac \partial {\partial x^i} v(x) \, (x^{i}-x^{i-1})^{\beta'-1} \, (x^{i+1}-x^i)^{\beta'-1} \, dx^i \right| \\
\leq &(\beta'-1) \|v\|_\infty \left( \int_{x^{i-1}}^{x^{i+1}} (x^{i}-x^{i-1})^{\beta'-2} \, dx^i +\int_{x^{i-1}}^{x^{i+1}} (x^{i+1}-x^{i})^{\beta'-2} \, dx^i  \right) \\
\leq & 2 \, (\beta'-1) \|v\|_\infty  \int_0^1 r^{\beta'-2} dr \leq C \, \|v\|_\infty,
\end{align*}
and we obtain that condition \eqref{cond_semimart} holds. For
$\beta'=1$   the measure $q_N$ coincides with the normalized
Lebesgue measure on $\overline{\Sigma}_N$,  condition
\eqref{cond_semimart} follows directly.

Let now $\beta'<1$ and let us assume that $u(X^N_t)-u(X^N_0)$ is a semi-martingale. Then, there exists a signed Radon measure $\nu$ on $\overline{\Sigma}_N$ satisfying \eqref{cond_semimart2}. Let $\nu=\nu_1-\nu_2$ be the Jordan decomposition of $\nu$, i.e.\ $\nu_1$ and $\nu_2$ are positive Radon measures. By the abobe calculations we have
for each relatively compact open set $G\subset \overline{\Sigma}_N$ and for all $v\in \mathcal{C}_G$
\begin{align*}
\E^N(u,v)=&- \frac 1 {Z_\beta} (\beta'-1)  \int_{G} v(x)  \prod_{\substack{j=0\\ j\not= i-1,\,i}}^N (x^{j+1}-x^j)^{\beta'-1}  \\
& \times  \left[(x^{i}-x^{i-1})^{\beta'-2} \, (x^{i+1}-x^i)^{\beta'-1} - (x^{i}-x^{i-1})^{\beta'-1} \, (x^{i+1}-x^i)^{\beta'-2} \right]  \, dx.
\end{align*}
Hence,  we obtain for the Jordan decomposition $\nu=\nu_1-\nu_2$ that
\begin{align*}
\nu_1(G)&=\frac 1 {Z_\beta} (1-\beta') \int_{G} (x^{i+1}-x^i)^{\beta'-2}  \prod_{\substack{j=0\\ j\not=i}}^N (x^{j+1}-x^j)^{\beta'-1} dx \\
\nu_2(G)&=\frac 1 {Z_\beta} (1-\beta') \int_{G} (x^{i}-x^{i-1})^{\beta'-2}  \prod_{\substack{j=0\\ j\not=i-1}}^N (x^{j+1}-x^j)^{\beta'-1} dx.
\end{align*}

Set $\partial \Sigma_N^j:=\{ x\in \partial \Sigma_N: \, x^j=x^{j+1} \}$, $j=0,\ldots, N$, and let for some $x_0 \in \partial \Sigma^i$ and $r>0$, $A:=x_0 + [-r,r]^N \cap \overline{\Sigma}_N$ be such that $\dist(A,\partial \Sigma_N^j)>0$ for all $j\not=i$. Furthermore, let $(A_n)_n$ be a sequence of compact subsets of $A$ such that $A_n\uparrow A$ and $\dist(A_n,\partial \Sigma_N^i)>0$ for every $n$.

By the inner regularity of the Radon measures $\nu_1$ and $\nu_2$ we have $v_1(A)=\lim_n \nu_1(A_n)$ and $v_2(A)=\lim_n \nu_2(A_n)$. Since $\beta'-2<-1$, we get by the choice of $A$ that $\nu_1(A)=\infty$, while $\nu_2(A)<\infty$, which contradicts the local finiteness of $\nu$ and $\nu_1$, respectively.
\end{proof}

\def\cprime{$'$}

\end{document}